\documentclass[10pt,oneside]{amsart}
\usepackage{amsmath, amsthm, amssymb, graphicx, enumitem}

\newtheorem{thm}{Theorem}
\newtheorem{lem}[thm]{Lemma}

\newtheorem{case}{Case}
\newtheorem{subcase}{Subcase}
\makeatletter\@addtoreset{case}{thm}\makeatother
\makeatletter\@addtoreset{subcase}{case}\makeatother

\theoremstyle{definition}
\newtheorem{defn}{Definition}

\theoremstyle{remark}
\newtheorem*{rem}{Remark}

\begin{document}

\title{Knot theory of Morse-Bott critical loci}
\author{Metin Ozsarfati}
\address{Department of Mathematics, Michigan State University, East Lansing, MI 48824 }
\email{ozsarfat@msu.edu}

\begin{abstract} 
We give an alternative proof of that a critical knot of a Morse-Bott function $f: S^3 \rightarrow \mathbb{R}$ is a graph knot where the critical set of $f$ is a link in $S^3$ \cite{gordon}\cite{morgan}\cite{eisen-neu}. Our proof inducts on the number of index-1 critical knots of $f$. 
\end{abstract}

\maketitle

\section{$k$-functions on $S^3$}

We define  below  a specific Morse-Bott function \textbf{\cite{bott}} $f: S^3 \rightarrow \mathbb{R}$ to study knots in $S^3$ where each critical component of $f$ is a 1-dimensional manifold (circle). Throughout this paper, $f$ or $f_i$ will denote such functions.

\begin{defn}

A real valued smooth function $f: S^3 \rightarrow \mathbb{R}$ is \emph{a $k$-function} if:

\begin{enumerate}[label=(\roman*)]

\item The set of critical points of $f$ is a link $L$ in $S^3$ 

\item The Hessian of $f$ is nondegenerate in the normal direction to $L$. 

\end{enumerate}

\end{defn}

The link $L$ is called \emph{the critical link of $f$} and a component of $L$ is called \emph{a critical knot of $f$}. For each critical knot $K$ of $f$, there exist a tubular neighborhood $U$ of with local coordinates $(\theta,x,y)$ such that $f(\theta,x,y)=  c^2 (\pm x^2 \pm y^2) + d$ where $(\theta,0,0)$ are the coordinates for $K$ and $c, d$ are scalars. We will call such a neighborhood $U$ of $K$ \emph{a $k$-model neighborhood} and make an abuse of notation by identifying $U \subseteq S^3$ with $S^1 \times D^2$ where $D^2$ is the unit disk. The notation $(\theta,x,y)$ will refer to such local coordinates of a critical knot throughout this paper. 

\begin{defn} A link $L$ is \emph{$k$-mate} if $L$ is a subset of the critical link of some $k$-function. \end{defn}

The basic question is then which knots in $S^3$ are $k$-mate. Our main theorem answers this question. 

\begin{thm} \label{k-mate} A knot is $k$-mate if and only if it is a graph knot. 
\end{thm}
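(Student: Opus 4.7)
The plan is to prove both implications separately. The reverse direction, that every graph knot is $k$-mate, I would handle by constructing $k$-functions directly. Starting from the unknot, realized via $S^3 = S^1 * S^1$ with $f$ equal to the squared distance from one factor (both factor circles then appear as critical knots in $k$-model form), I would show that the class of $k$-mate knots is closed under the two operations generating graph knots from the unknot, namely cabling and connected sum. Cabling an existing critical knot $K$ is achieved by modifying $f$ inside a $k$-model neighborhood $S^1 \times D^2$ of $K$ to splice in a pair of new index-$0$ and index-$1$ critical circles along a $(p,q)$-curve on a concentric torus. Connected sum is handled by gluing two $k$-functions along a standard Morse function on a collar $S^2 \times [0,1]$.

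For the forward direction, that every $k$-mate knot is a graph knot, I would induct on the number $n$ of index-$1$ critical knots of a $k$-function $f$ realizing a given critical knot $K$. In the base case $n = 0$, sublevel sets of $f$ grow only by index-$0$ attachments (disjoint solid tori) and close up by index-$2$ attachments, so every component of a regular sublevel set is a solid torus. Pairing index-$0$ circles with index-$2$ circles writes $S^3$ as a standard union of two solid tori, and each critical knot of $f$ is an unknotted core, hence a graph knot.

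For the inductive step, I would pick an index-$1$ critical knot $K_0$ at the minimal index-$1$ critical level. Just below that level the sublevel set is a disjoint union of solid tori, and $K_0$ contributes a 1-handle bundle attached along an $S^0$- or $S^1$-bundle over $S^1$ on the boundary. The goal is to produce a new $k$-function $f'$ with $n - 1$ index-$1$ critical knots whose critical link still contains $K$; the inductive hypothesis then finishes. I would obtain $f'$ by a Morse--Bott handle cancellation: locate an index-$0$ critical circle $K_0'$ that is an upward gradient endpoint of the descending annulus (or M\"obius band) of $K_0$, and modify $f$ to eliminate the pair $(K_0, K_0')$ without disturbing the other critical knots. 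When no such cancellation is available, the descending manifold of $K_0$ wraps nontrivially around its adjacent solid torus; from that wrapping I would extract an essential embedded torus $T \subset S^3$ that decomposes the exterior of $K$ into Seifert-fibered pieces carried by the remaining critical structure, directly witnessing that $K$ has graph-manifold exterior.

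The hard part will be the inductive step. One must analyze the 1-handle attaching data over $K_0$ case by case --- orientable vs.\ non-orientable normal $D^1$-bundle over $K_0$, 1-handle joining two distinct solid tori vs.\ a single torus to itself, and possibly nontrivial linking of $K_0$ with the other critical circles --- and show that in each configuration either the Morse--Bott cancellation succeeds or a JSJ-style decomposing torus can be extracted. Equally delicate is the bookkeeping needed to verify that $K$ itself (and, indeed, every other critical knot of $f$ that one wishes to retain) survives as a critical knot of the simplified $f'$ throughout the modification.
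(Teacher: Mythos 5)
Your overall architecture matches the paper's: the easy direction is proved by showing $k$-mateness is closed under cabling and connected sum starting from the unknot (the paper's Lemmas \ref{no saddle}, \ref{cable}, \ref{sum} and \ref{thread}), and the hard direction is an induction on the number of saddles that eliminates the lowest saddle $K_1$ by merging its round $1$-handle into the adjacent round $0$-handle(s). But your inductive step has a genuine gap exactly where the paper has to work hardest. The problematic configuration is when the two stable circles $C_1\simeq (P_1)_{p,q}$ and $C_2\simeq (P_2)_{r,s}$ of the lowest saddle are non-meridional, non-longitudinal cables ($p,r\neq 0,\pm 1$). Then the merged region $E=E_1\cup E_2\cup\tilde A$ is a Seifert-fibered space over a disk with two singular fibers, its complement $V=S^3-\mathrm{Int}(E)$ is a solid torus whose core is a \emph{nontrivial} torus knot, and the knot $K$ you are trying to certify may lie inside $V$. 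At this point ``cancel the pair and invoke the inductive hypothesis'' does not go through: to get a $k$-function on $S^3$ with fewer saddles you must re-embed $V$ as a standard solid torus via some $g:V\to S^3$, and the induction then only tells you that $g(K)$ is a graph knot of the transported function, which does not by itself imply that $K=g^{-1}(g(K))$ is a graph knot, since isotopy classes are not preserved under $g^{-1}$.

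The paper closes this gap by strengthening the inductive statement (Theorem \ref{thread kit}): it carries along a ``graph kit'' of solid tori, each bounded by a regular level set of $f$, witnessing every cabling and connected-sum step in the construction of $K$. Because these witnesses are solid tori contained in $g(V)$ and bounded by level sets, they can be pulled back through $g^{-1}$, and the separate Lemma \ref{unknot pair} (two unknots, one critical and one the core of an unknotted solid torus bounded by a regular level, are cables of each other) is needed to control the kit elements whose cores are unknotted in $g(V)$ but become nontrivial cables of the torus-knot core of $V$ after pulling back. Your fallback of ``extracting an essential torus that decomposes the exterior of $K$ into Seifert-fibered pieces'' is the right intuition (it is closer to Morgan's original argument), but as stated it is a single torus where a full JSJ certification of the exterior is required; without either that global decomposition or the paper's strengthened inductive bookkeeping, the induction does not close. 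Separately, your worry about non-orientable descending bundles is moot here: the paper's definition of a $k$-model neighborhood builds in the product form $f(\theta,x,y)=c^2(\pm x^2\pm y^2)+d$ on $S^1\times D^2$, so the stable and unstable sets of a saddle are annuli, never M\"obius bands.
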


A knot $K$ is \emph{a graph knot} if  \emph{the JSJ-decomposition} \textbf{\cite{jsj1}}, \textbf{\cite{jsj2}} of the complement of $K$ are all Seifert fibered pieces. Such $3$-manifolds are called \emph{graph manifolds} \textbf{\cite{wald}}. Another characterization of a graph knot $K$  is that \emph{the Gromov volume} of the complement of $K$ is zero \textbf{\cite{gromov}}, \textbf{\cite{thurston}}. It is shown in \textbf{\cite{gordon}} (Corollary 4.2) that a graph knot $K$ is obtained from an unknot by a finite application of connected sum or cabling operations to it (see Definition \ref{graph knot}). 

Note that graph knots form a small class of knots; e.g. hyperbolic knots do not belong to this class. A proof of Theorem \ref{k-mate} first appears in \textbf{\cite{morgan}} where connected sum operation has been overlooked though. \emph{Graph links} (similarly defined) have been effectively classified in \textbf{\cite{eisen-neu}} (where they are called \emph{Seifert links}). The results in \cite{eisen-neu} are stronger than ours as they study links in a 3-dimensional homology sphere.

\section{Basic notions and results}

The standard notions about $k$-functions we introduce here are actually the usual standard notions about Morse functions on a manifold \textbf{\cite{milnor2}}.  A critical knot $K$ of a $k$-function $f$ is called a source, sink or saddle respectively if the signs in $f(\theta,x,y)= c^2(\pm x^2 \pm y^2) + d$ are both positive, both negative or opposite respectively. We adopt \emph{the sign conventions in} $f(\theta,x,y) =  c^2(y^2 - x^2) + d$ for a saddle. 

In the saddle $K$ case, the two circles $S^1 \times  (\pm 1, 0)$ are called \emph{ stable circles of $K$ in $U$} and the two circles $S^1 \times  (0, \pm 1)$ are called \emph{the unstable circles of $K$ in $U$}. Similarly, the annulus $S^1 \times [-1,1] \times \lbrace 0 \rbrace$ is called \emph{the stable annulus of $K$ in $U$} and the annulus $S^1 \times \lbrace 0 \rbrace \times [-1,1]$ is called \emph{the unstable annulus of $K$ in $U$}. Note that neither $k$-model coordinates $(\theta,x,y)$ nor stable or unstable circles of a saddle are unique. The stable and unstable circles of a saddle $K$ can be isotoped to $K$ within the stable or unstable annulus so that they are parallel cable knots of $K$. They homologically have $\pm 1$ longitude coefficients (and some arbitrary meridian coefficient)  in $H_1(\partial U)$.  

A point in $f(L) \subseteq \mathbb{R}$ where $L$ is the critical link of a $k$-function $f$ is called \emph{a critical value of $f$} and a point in $\mathbb{R} - f(L)$ is called \emph{a regular value of $f$}. We will define \emph{an ordered $k$-function} later and Lemma \ref{torus} shows that the preimage of a regular value of an ordered $k$-function is a collection of disjoint tori in $S^3$.

Given a $k$-function $f$, there exists \emph{a gradient like vector field $X$ on $S^3$ for $f$}. More precisely, $X_p(f)$ is positive if $p$ is not a critical point of $f$ and also, $X(\theta,x,y) = c^2 \cdot (\pm 2x \frac{\partial}{\partial x} \pm 2y \frac{\partial}{\partial y})$ around a critical knot of $f$ (see e.g. \textbf{\cite{milnor1}} for the existence of a gradient like vector field for a Morse function). The function $f$ is increasing on the forward flow lines of $X$. For any point $p$ in $S^3$ , the flow line $X_t(p)$ converges to a critical point of $f$ as $t \rightarrow \pm \infty$. An important application of $X$ is that the flow of $X$ gives an isotopy between the regions $f^{-1}((-\infty, r])$ and $f^{-1}((-\infty, r+\epsilon])$ in $S^3$ (here, $\epsilon >0$) when $f^{-1}([r,r+\epsilon])$ does not contain any critical points of $f$.

A source of a $k$-function is a sink of $-f$ and vice versa. We may always assume a $k$-mate knot to be a source of some $k$-function by the following lemma. 

\begin{lem} \label{source} A knot $K$ is a source of some $k$-function if and only if it is a saddle of some $k$-function.

\end{lem}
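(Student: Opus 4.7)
The plan is a local modification: in a $k$-model neighborhood of $K$, we flip the signature of the normal Hessian at $K$ by adding a $\theta$-independent bump to $f$, introducing a symmetric pair of auxiliary critical knots parallel to $K$. We describe the ``source $\Rightarrow$ saddle'' direction; the converse follows by the same construction with the opposite sign on the bump.

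Assume $K$ is a source of a $k$-function $f$ and pick a $k$-model neighborhood $U \cong S^1 \times D^2$ with $f(\theta,x,y) = c^2(x^2+y^2)+d$. Let $\chi \colon [0,\infty) \to [0,1]$ be a smooth cutoff equal to $1$ on $[0,1/4]$ and to $0$ on $[1/2,\infty)$, and fix a constant $A$ with $c^2 < A < 2c^2$. Define
\[
f'(\theta,x,y) \;=\; c^2(x^2+y^2) \;-\; A\,x^2\,\chi(x^2+y^2) \;+\; d
\]
on $U$ and $f' = f$ on $S^3 \setminus U$. This is smooth because the perturbation vanishes on the collar $\{r^2 \geq 1/2\}$ of $\partial U$. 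The Hessian of $f'$ at $K = S^1 \times \{0\}$ in the $(x,y)$ directions is $\operatorname{diag}\bigl(2(c^2 - A),\,2c^2\bigr)$, which is indefinite for $A > c^2$, so $K$ is now a non-degenerate saddle of $f'$.

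A direct computation of $\nabla_{(x,y)} f' = 0$ on $U$, exploiting that the perturbation depends only on $(x,\,x^2+y^2)$, shows the only other critical points of $f'|_U$ are a symmetric pair $(\pm x_1, 0)$ on the segment $y=0$, located where $A\,F'(x_1^2) = c^2$ for $F(t) = t\chi(t)$. A Hessian calculation, using that $F''(x_1^2) < 0$ at the first descending root and that $A\chi(x_1^2) \leq A < 2c^2$, shows these points are non-degenerate local minima, and so $S^1 \times \{(\pm x_1, 0)\}$ are two new source critical knots of $f'$, both parallel to $K$ inside the solid torus $U$. Thus $f'$ is a $k$-function with $K$ as a saddle. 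For the converse (saddle $\Rightarrow$ source), replace the saddle model $c^2(y^2 - x^2) + d$ on $U$ with $c^2(y^2 - x^2) + A\,x^2\,\chi(x^2+y^2) + d$; the same Hessian analysis shows $K$ becomes a source and the two new critical knots at $(\pm x_1, 0)$ are saddles.

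The main technical obstacle is verifying that $f'$ is a genuine $k$-function: its critical set must be a link and its Hessian must be non-degenerate in the normal direction at each critical knot. Because the perturbation is $\theta$-independent, the new critical set in $U$ is automatically a disjoint union of circles parallel to $K$, so the overall critical set remains a link in $S^3$. Non-degeneracy at the auxiliary critical points reduces to $F''(x_1^2) \neq 0$, which holds for generic cutoff $\chi$ and, under the constraints on $A$ above, can be read off directly from the explicit Hessian computation.
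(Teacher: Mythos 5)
Your proposal is correct in substance but runs differently from the paper's argument. The paper never changes the type of $K$ itself: starting from a saddle $K$, it leaves $f$ untouched on a smaller model neighborhood $U$ and inserts, in the annular shell $\tilde{U} - U$, a new source $K_1$ and a new saddle $K_2$ parallel to $K$, so that $K$ is merely \emph{isotopic} to a source of the modified function; the argument is pictorial (a figure of gradient trajectories) and gives no formulas. You instead flip the normal index of $K$ itself by an explicit $\theta$-independent bump and balance the change with parallel circles at $(\pm x_1,0)$; this is more computational and more self-contained, since every Hessian is written down, and it details the direction (source $\Rightarrow$ saddle) that the paper dismisses as ``similar.'' Two points need more care. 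First, the equation $A\,F'(t)=c^2$ may have several roots in $(0,1/2)$, so you may create more than one pair of auxiliary circles; this is harmless (the critical set is still a link and, for generic $\chi$, every circle is nondegenerate), but ``the only other critical points'' should be weakened, or $\chi$ chosen so that $F'$ is strictly decreasing on the support of $\chi'$. Second, in the converse direction the sign of $\chi'$ works against you: now $\partial_y f' = 2y\,\bigl(c^2 + A x^2 \chi'(r^2)\bigr)$ can have off-axis zeros of the second factor, and $\partial_{yy} f'(\pm x_1,0) = 2\bigl(2c^2 - A\chi(x_1^2)\bigr)$ after substituting $A x_1^2 \chi'(x_1^2) = c^2 - A\chi(x_1^2)$. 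Both issues are rescued precisely by your hypothesis $A < 2c^2$ (at an off-axis zero of the second factor one computes $\partial_x f' = 2x\bigl(A\chi(r^2) - 2c^2\bigr) \neq 0$), so this direction is not literally ``the same Hessian analysis'' and deserves to be written out; as stated you attach the bound $A\chi(x_1^2)\leq A<2c^2$ to the forward direction, where it is not actually needed.
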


\begin{proof} We will prove only one direction as the other one is similar. Suppose that $K$ is a saddle of a $k$-function $f$. Let $\tilde{U}$ be a $k$-model neighborhood of  $K$. Let $D$ be the disk of radius $1/2$ centered at the origin in $\mathbb{R}^2$ and consider the smaller $k$-model neighborhood $U = S^1 \times D$ of $K$ inside $\tilde{U}$.

Consider the isotopic knots $K_1 := S^1 \times  (2/3,0)$ and $K_2 := S^1 \times (5/6,0)$ and take a small tubular neighborhood $V_i$ of $K_i$ in $\text{Int}(\tilde{U})$ so that $V_i$ intersects each meridian disk $\lbrace \theta \rbrace \times D^2$ of $\tilde{U}$ in a disk (See Fig. \ref{source figure}). Moreover, the intersections $V_1 \cap U = \partial V_1 \cap \partial U$ and $V_1 \cap V_2 = \partial V_1 \cap \partial V_2$ are both annuli the core of which are isotopic to $K$ and also, $V_2 \cap U = \varnothing$. We can define a $k$-function $f_1$ by modifying $f$ only within $\tilde{U} - U$ so that $V_1$ contains a $k$-model neighborhood of the source $K_1$ of $f_1$ and $V_2$ becomes a $k$-model neighborhood of the saddle $K_2$ of $f_1$. Here, $K$ is isotopic to the source $K_1$.

\begin{figure}[h]
\begin{center}
\includegraphics[scale = 0.5]{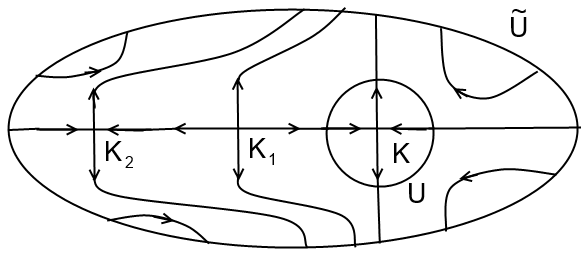} 
\caption{The figure shows some of the trajectories of a gradient like vector field for $f_1$.}
\label{source figure}
\end{center}
\end{figure}
\end{proof}

\begin{lem} \label{no saddle} Let $f$ be a $k$-function without any saddles. Then, $f$ has a single source and a single sink which form a Hopf link in $S^3$.

\end{lem}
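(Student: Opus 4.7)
The plan is to use the gradient-like flow $X$ to show in order that $f$ has a unique source, a unique sink, and that they realize a genus-$1$ Heegaard splitting of $S^3$ with cores forming a Hopf link. Without saddles, every critical knot is either a source or a sink, and the global minimum (resp.\ maximum) of $f$ is automatically a source (resp.\ sink), so at least one of each exists.

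For uniqueness I use a basin-of-attraction argument. For each source $K_i$ let $U(K_i) \subset S^3$ be the set of points whose backward $X$-orbit converges to $K_i$; in $k$-model coordinates $-X$ contracts a neighborhood of $K_i$ onto itself, and this propagates along the flow, so $U(K_i)$ is open. In the absence of saddles the only attractors of $-X$ among the critical set are sources, hence every non-sink point's backward $X$-orbit terminates at some source, giving a partition
\[ S^3 \setminus L_{\mathrm{sinks}} \;=\; \bigsqcup_i U(K_i) \]
into nonempty, pairwise disjoint open sets. The link $L_{\mathrm{sinks}}$ has codimension two in $S^3$, so its complement is connected, forcing the partition to have a single block: $f$ has exactly one source $K$. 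The symmetric argument applied to stable manifolds of sinks under $X$-flow yields exactly one sink $K'$.

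To produce the Heegaard splitting, I observe that a small $k$-model neighborhood of $K$ is a solid torus, and since $(f(K), f(K'))$ contains no critical values, the $X$-flow isotopes this neighborhood outward through regular levels to $V := f^{-1}((-\infty, f(K') - \epsilon])$, a solid torus with core isotopic to $K$. Symmetrically $W := f^{-1}([f(K') - \epsilon, \infty))$, obtained by flowing a $k$-model neighborhood of $K'$ outward under $-X$, is a solid torus with core $K'$. Their intersection is the regular level torus $f^{-1}(f(K') - \epsilon)$ and their union is all of $S^3$, so $(V, W)$ is a genus-$1$ Heegaard splitting.

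Since $S^3 \setminus K$ deformation retracts onto the solid torus $W$, the knot $K$ is an unknot; symmetrically $K'$ is unknotted. The linking number $\mathrm{lk}(K, K')$ equals the class of $K'$ in $H_1(S^3 \setminus K) \cong H_1(W) \cong \mathbb{Z}$, and since $K'$ generates $H_1(W)$ as its core, $\mathrm{lk}(K, K') = \pm 1$. A two-component link of unknots with linking number one is the Hopf link. The main subtle step I expect is the uniqueness argument: the no-saddle hypothesis must be deployed precisely to ensure every backward $X$-orbit actually terminates at a source rather than stagnating elsewhere, so that the $U(K_i)$ genuinely partition $S^3 \setminus L_{\mathrm{sinks}}$ into open sets and the connectedness argument can be applied.
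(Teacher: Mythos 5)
Your overall strategy parallels the paper's (gradient-like flow, then a genus-one Heegaard splitting $V\cup W=S^3$ with cores $K$ and $K'$), though your uniqueness argument is genuinely different: you partition $S^3\setminus L_{\mathrm{sinks}}$ into the open basins of the sources and use that the complement of a codimension-two link is connected, whereas the paper flows the whole boundary torus $\partial U_1$ of a source neighborhood forward until it lands on the boundary of a sink neighborhood and observes that the resulting closed connected $3$-manifold embedded in $S^3$ must be all of $S^3$. Both are correct; the paper's version gets uniqueness and the Heegaard splitting in one stroke, while yours cleanly separates the dynamical and topological parts. (Your basin argument does need the small observation that each $U(K_i)$ is open and that backward orbits of non-critical points limit to sources when there are no saddles, but you address this.)

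The genuine problem is your last step. The statement ``a two-component link of unknots with linking number one is the Hopf link'' is false: linking number together with the knot types of the components does not classify two-component links. For example, let $K$ be an unknot with exterior the solid torus $W$, and let $K'$ be any curve in $W$ of winding number $1$ that is unknotted in $S^3$ but not isotopic in $W$ to the core (such curves exist in abundance, e.g.\ suitable $1$-bridge braids or a core band-summed with a small nullhomotopic clasped circle). Then $K\cup K'$ has two unknotted components and $\mathrm{lk}(K,K')=\pm 1$ but is not the Hopf link. So as written the final inference fails. The repair is immediate from what you have already established: $K$ and $K'$ are (isotopic to) the cores of the two complementary solid tori of a genus-one Heegaard splitting of $S^3$, and by the uniqueness of that splitting (equivalently, the topological classification of lens spaces, which is exactly what the paper invokes) the two solid tori are the standard complementary ones, so their cores form a Hopf link. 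You should conclude directly from this structural fact and drop the linking-number detour.
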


\begin{proof} Since $S^3$ is closed, $f$ has at least one source $K_1$ and a sink $K_2$. Let $X$ be a gradient like vector field for $f$ and $X_t$ be the flow of $X$. Take a point $p$ in  $\partial U_1$ where $U_1$ is a $k$-model neighborhood of $K_1$. The point $X_t(p)$ will be in a $k$-model neighborhood $U_3$ of a sink $K_3$ of $f$ for large enough $t$ since $f$ has no saddles. Say, $X_{a_p}(p) \in U_3$ for some $a_p > 0$. Since $\partial U_1$ is compact and connected and $f$ does not have any saddles, we have $X_a(\partial U_1) \subseteq U_3$ for some time $a \geq a_p$. We may assume that $X_a(\partial U_1) = \partial U_3$ after scaling $X$ with a positive smooth function on $S^3$ if necessary. Then, $X_{a}(U_1) \cup U_3$ is an embedded, closed and connected 3-manifold in the closed and connected $S^3$. Therefore, $X_{a}(U_1) \cup U_3$ is $S^3$. Hence, the source $K_1$ and the sink $K_3 = K_2$ are the only critical knots of $f$. Let $P_1$ and $P_3$ denote the core of the solid tori  of $X_{a}(U_1)$ and $U_3$ respectively. The union $X_{a}(U_1) \cup U_3$ gives a lens space description of $S^3$ so that the two solid tori $X_{a}(U_1)$ and $U_3$ are two complementary standard solid tori in $S^3$ by the topological classification of lens spaces. Therefore, $P_1 \cup P_3 \simeq K_1 \cup K_3$ is a Hopf link.
\end{proof}

The above lemma shows that an unknot is $k$-mate. The next two lemmas describe a way to construct other $k$-mate knots and as we will show later in Theorem \ref{k-mate}, all $k$-mate knots  arise in this way starting with the unknot.

A knot $K$ is a cable knot of $J$ if $K$ can be isotoped into $\partial U$ where $U$ is a closed tubular neighborhood of $J$ in $S^3$. Here, $K$ is allowed to bound a disk in $\partial U$ so that an unknot is a trivial cable knot of any knot $J$. Even when $K$ does not bound a disk in $\partial U$ so that $K$ is not a trivial cable knot of $J$, the cable knot $K$ can be a meridian of $J$ or a longitude of an unknot $J$ so that $K$ is still a trivial knot. We will use the notation $K \simeq J_{p,q}$ which says that the cable knot $K$ of $J$ is homologically $p$ longitudes plus $q$ meridians of $J$. We will sometimes conveniently suppress the coefficients $p$ and $q$ and use the notation $K \succ J$ instead.

\begin{lem} \label{cable} A cable knot $K$ of a $k$-mate knot $J$ is $k$-mate.
\end{lem}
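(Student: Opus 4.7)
The plan is to modify a given $k$-function $f$ for which $J$ is a source by breaking the rotational symmetry of its local model near $J$ with a $(p,q)$-periodic perturbation, so that a pair of parallel $(p,q)$-cables of $J$ emerge as new critical circles, one of which is isotopic to $K$.

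First, using Lemma \ref{source}, I would assume that $J$ is a source of a $k$-function $f$, with $k$-model neighborhood $U = S^1 \times D^2$ on which $f = c^2(x^2+y^2)+d$. Let $(\theta,r,\phi)$ be longitudinal and polar coordinates on $U$. Since any two tubular neighborhoods of $J$ are ambient isotopic in $S^3$, after an isotopy supported away from the rest of the critical link of $f$ I may assume $K \subset \partial U$ and $K = \{q\theta - p\phi \equiv 0\}$ for coprime integers $p, q$.

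The heart of the construction is the following. On a thin annular collar $\{a \le r \le b\} \subset U$ with $0 < a < b < 1$, I would replace $f$ by
\[
\tilde f(\theta, r, \phi) = F(r) + G(r)\cos(q\theta - p\phi),
\]
where $F, G$ are smooth profiles on $[a,b]$ matching $c^2 r^2 + d$ and $0$ to infinite order at the endpoints (so that $\tilde f$ glues smoothly to the unchanged $f$ outside the collar). The critical set of $\tilde f$ inside the collar is cut out by the simultaneous equations $\sin(q\theta - p\phi) = 0$ and $F'(r) + G'(r)\cos(q\theta - p\phi) = 0$. Choosing $F, G$ so that $F' + G'$ and $F' - G'$ each have a single transverse zero, at distinct radii $r_0, r_1 \in (a,b)$ with $G(r_i) \ne 0$ and $F''(r_i) \pm G''(r_i) \ne 0$, yields exactly two disjoint new critical circles, lying on the tori $r = r_0$ and $r = r_1$, both $(p,q)$-cables of $J$, hence both ambient isotopic to $K$ in $S^3$.

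Next, I would verify that $\tilde f$ is a $k$-function. Working in a local chart $(u, v, r)$ with $u = q\theta - p\phi$ and $v = p\theta + q\phi$ (a valid local coordinate change since $\gcd(p,q)=1$), the Hessian of $\tilde f$ at each new critical point is diagonal of the form $\mathrm{diag}(\mp G(r_i),\, 0,\, F''(r_i) \pm G''(r_i))$, with the zero eigenvalue corresponding to the tangent direction $\partial_v$ of the critical circle. The chosen conditions on $F, G$ make the two normal-direction entries nonzero, giving nondegeneracy in the normal direction. Since the critical set of $\tilde f$ elsewhere coincides with that of $f$, the full critical set is still a link, and $\tilde f$ is a $k$-function whose critical link contains a knot isotopic to $K$; thus $K$ is $k$-mate.

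The main obstacle is simply the existence of compatible profile functions $F, G$ satisfying all requirements at once: matching $c^2 r^2 + d$ and $0$ to infinite order at the collar endpoints, producing two transverse zeros of $F' \pm G'$ at distinct interior radii, and keeping $G$ and $F'' \pm G''$ nonzero at those radii. This is a standard bump-function construction: starting from $F(r) = c^2 r^2 + d$ and a suitably shaped $G$ with two humps of appropriate signs (so that $G'$ has the sign needed to cancel $F' = 2c^2 r > 0$ at one point for $F'+G'$ and at another for $F'-G'$), all the conditions can be met; the nondegeneracy conditions are open, so generic small adjustments preserve them. A minor subtlety, that the integer pair $(p,q)$ depends on the framing of the tubular neighborhood, is immaterial, since the isotopy class of $K$ in $S^3$ is the given data.
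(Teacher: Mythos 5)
Your overall strategy---make $J$ a source via Lemma \ref{source}, then perturb $f$ inside the model neighborhood so that a pair of $(p,q)$-cables of $J$ appears as new nondegenerate critical circles---is sound, and is essentially an explicit analytic version of what the paper does (the paper isotopes $K$ to be transverse to the meridian disks of $U$ and repeats the push-off construction of Lemma \ref{source} to insert a new source and a new saddle, each isotopic to $K$). However, the profile conditions you impose are contradictory, so the construction as stated cannot be carried out. Since $F$ agrees with $c^2r^2+d$ and $G$ agrees with $0$ to infinite order at $r=a$ and $r=b$, both $F'+G'$ and $F'-G'$ equal $2c^2r>0$ at the two endpoints of the collar; a smooth function that is positive at both ends of an interval and has only transverse zeros has an \emph{even} number of them, so neither $F'+G'$ nor $F'-G'$ can have ``a single transverse zero.'' Your own description of $G$ runs into this: for $G'$ to cancel $2c^2r>0$ in $F'+G'$ it must dip below $-2c^2r$, and since $G'$ vanishes at both endpoints it must cross that level at least twice. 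The repair is to let, say, $F'-G'>0$ throughout while $F'+G'$ has exactly two transverse zeros $r_0<r_0'$; one then gets two new critical circles, both on $\lbrace q\theta-p\phi\equiv 0\rbrace$, a sink-type circle at $r_0$ and a saddle at $r_0'$ (not one circle at $u=0$ and one at $u=\pi$ as you describe), both still $(p,q)$-cables of $J$, which is all the lemma needs.

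A second, smaller gap: $\partial_u\tilde f=-G(r)\sin u$, so the critical set is cut out by $G(r)\sin u=0$, not by $\sin u=0$. Interior zeros of $G$ (which a ``two-hump'' profile of mixed signs necessarily has) contribute extra critical points wherever $F'(r)+G'(r)\cos u=0$ is solvable, i.e.\ wherever $\lvert G'\rvert\geq F'$ at a zero of $G$. You must additionally arrange either that $G$ is nonvanishing on the open collar (which is compatible with the corrected zero count above, taking $G>0$ on $(a,b)$) or that $\lvert G'\rvert<F'$ at every interior zero of $G$. Finally, the trivial cable---$K$ bounding a disk in $\partial U$---is not of the form $q\theta-p\phi\equiv 0$ with $p,q$ coprime and must be disposed of separately, as the paper does, by noting that an unknot is $k$-mate by Lemma \ref{no saddle}. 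With these repairs your argument goes through and yields a self-contained, formula-based alternative to the paper's picture-based insertion of a parallel source--saddle pair.
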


\begin{proof}
If $K$ is trivial, then it is $k$-mate by Lemma 1. Otherwise, $K$ can be isotoped to be transverse to the meridian disks of $k$-model neighborhood $U$ of $J$. We may assume that $J$  is a source of a $k$-function $f$ by Lemma \ref{source}. The rest of the proof will follow exactly as in that lemma where $f$ gets modified only within $U$ but still preserving a smaller tubular neighborhood of $J$. The knot $K$ becomes another source and a saddle isotopic to $K$ gets inserted between $K$ and $J$. 
\end{proof}

A connected sum $K_1 \# K_2$ of two knots $K_1$ and $K_2$ is not well defined in general unless both $K_1$ and $K_2$ and their ambient spaces $S^3$'s are all oriented. One can regard $K_1$ and $K_2$ as a split link in the same ambient space $S^3$ and an orientation of this single $S^3$ can be fixed easily. However, a $k$-function does not induce a natural orientation on a critical knot of it. While we study $k$-functions, we will strictly work with \emph{unoriented knots}. The notation $K_1 \# K_2$ will then denote a knot in the set $\lbrace K_1^+ \# K_2^+ , K_1^+ \# K_2^- \rbrace$ of knots where $K_i^\pm$ specifies an orientation of $K_i$.

\begin{lem} \label{sum} A connected sum $K_1 \# K_2$ of $k$-mate knots $K_1$ and $K_2$ is $k$-mate.

\end{lem}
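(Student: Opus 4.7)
By Lemma \ref{source} I may assume $K_1$ and $K_2$ are sources of $k$-functions $f_1, f_2 : S^3 \to \mathbb{R}$. Fix $k$-model neighborhoods $U_i \cong S^1 \times D^2$ of $K_i$. Since the $k$-function conditions are preserved under positive affine transformations of the range and under rescalings of the normal coordinates, I may further normalize so that $f_i(\theta, x, y) = x^2 + y^2$ on each $U_i$.

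The plan is to realize $K_1 \# K_2$ via an ambient connected sum performed at the two sources. Inside $U_i$ pick a short sub-arc $\alpha_i \subset K_i$ and a thin cylindrical 3-ball $B_i = [-a, a] \times D^2_r \subset U_i$ with $\alpha_i = [-a,a] \times \{0\}$ as its core; on $B_i$ one has the standard source model $f_i|_{B_i} = x^2 + y^2$. Let $W_i := S^3 \setminus \text{Int}(B_i)$, a 3-ball, and choose a homeomorphism $\phi : \partial B_1 \to \partial B_2$ which (i) identifies the two endpoints of $\alpha_1$ with the two endpoints of $\alpha_2$ in the pattern yielding a connected sum of knots, and (ii) near each of those endpoint pairs identifies the $k$-model coordinates on $W_1$ with those on $W_2$. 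Then $M := W_1 \cup_\phi W_2 \cong S^3$ and the image of $(K_1 \setminus \alpha_1) \cup (K_2 \setminus \alpha_2)$ closes up in $M$ to a knot isotopic to $K_1 \# K_2$.

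It remains to glue $f_1|_{W_1}$ and $f_2|_{W_2}$ into a smooth $k$-function $f$ on $M$. A thin collar of $\partial W_i$ inside $W_i$ meets the critical arc $K_i \cap W_i$ only in two small wedges around the endpoints of $\alpha_i$; elsewhere the collar is disjoint from the critical set of $f_i$, and a partition-of-unity argument lets me modify $f_i$ there to arrange $f_1 = f_2 \circ \phi$ along $\partial W_1$ together with all normal derivatives, without introducing new critical points or disturbing the critical arc. Condition (ii) then guarantees that at each endpoint of $\alpha_i$ the model $x^2 + y^2$ glues to itself, so $f$ is a smooth $k$-function whose critical link contains $K_1 \# K_2$, exhibiting it as $k$-mate.

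The main obstacle is the simultaneous smoothness and preservation of the source $k$-model at the two endpoints on $\partial W_i$, which are interior points of the new critical knot $K_1 \# K_2$. I would handle this by packaging the normalization $f_i = x^2 + y^2$ with the requirement that $\phi$ restrict to the identity in $k$-model coordinates on small disks of $\partial B_i$ around those endpoints, reparameterizing $\theta$ if necessary to remove any corner along $K_1 \# K_2$ at the gluing; on the rest of $\partial W_i$ the collar is free of critical points and the gluing can be arranged by standard partition-of-unity interpolation.
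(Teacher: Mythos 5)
There is a genuine gap: the claim that you can interpolate between $f_1|_{W_1}$ and $f_2\circ\phi$ in a collar of the summing sphere ``without introducing new critical points'' is false, and no partition-of-unity argument can rescue it. A counting argument shows this immediately. Take $f_1$ and $f_2$ both saddle-free, so by Lemma~\ref{no saddle} each has exactly one unknot source and one unknot sink. Your glued $f$ would then have one source ($K_1\#K_2$), no saddles, and \emph{two} sinks, contradicting Lemma~\ref{no saddle}. The underlying geometric reason is visible in the level sets: the boundary of a small neighborhood of the new source $K_1\#K_2$ is a single torus, while the rest of the critical set lives in two regions (one per summand, each diffeomorphic to a knot complement) whose boundaries are two disjoint tori; a critical-point-free region cannot be a cobordism from one torus to two tori, since the gradient flow would force it to be a product $T^2\times I$. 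So new critical points are unavoidable, and your construction does not produce a $k$-function.

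The paper's proof supplies exactly the missing ingredient: after making $K_1\#K_2$ a source inside a tubular neighborhood $V$, it inserts a new \emph{saddle} $J$, namely the core of the annulus $S-V$ where $S$ is the summing sphere, with $S-V$ as its stable annulus. Passing this saddle performs the surgery that splits the level torus $\partial V$ into two tori, one bounding the complement of $K_1$ and the other the complement of $K_2$, and only then can $f$ be matched with $f_1$ and $f_2$ (normalized via Lemma~\ref{source} and the gradient flow so that $f_1(\partial U_1)=f_2(\partial U_2)$) on those two regions. Your ambient-sum setup of gluing $W_1$ and $W_2$ along a sphere is a reasonable topological description of $K_1\#K_2$, but the proof cannot be completed without accounting for this extra saddle circle.
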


\begin{proof} The $k$-mate knots $K_1$ and $K_2$ are sources of some $k$-functions $f_1$ and $f_2$ by Lemma \ref{source} respectively. Let $S$ be a sphere in $S^3$ which yields the connected summands $K_1$ and $K_2$ of $K_1 \# K_2$. Let $\tilde{S}$ be a small closed tubular neighborhood of $S$ in $S^3$ so that $\tilde{S} \cap K_1 \# K_2$ is two unknotted arcs in $\tilde{S} \simeq S^2 \times [0,1]$. Let $V$ be a small closed tubular neighborhood of $K_1 \# K_2$ in $S^3$ such that $V \cap \partial \tilde{S}$ is four disjoint disks and also, $V \cup \tilde{S}$ is smoothly embedded in $S^3$. Let $J$ denote the core of the annulus $S - V$. The region $S^3- \text{Int}(V \cup \tilde{S})$ has two connected components each of which is diffeomorphic to the complement of $K_1$ or $K_2$ in $S^3$. Let $R_i$ denote the component of $S^3 - \text{Int}(V \cup \tilde{S})$ that is diffeomorphic to the complement of $K_i$. Let $U_i$ be a small closed $k$-model neighborhood of the source $K_i$ of $f_i$. We may assume that $S^3 - \text{Int}(U_i) = R_i$ by isotoping $U_i$ in $S^3$. See Figure \ref{sum figure}. By using the flow of the gradient like vector fields for $f_1$ and $f_2$, we may scale $f_1$ and $f_2$ and add some constants so that they agree in a small tubular neighborhood of $\partial U_1$ or $\partial U_2$ with  $f_1(\partial U_1) = f_2(\partial U_2) = 1$ (see e.g. \textbf{\cite{milnor1}} for such scaling of $f_i$).

We can now define a $k$-function $f$ such that: 

\begin{enumerate}[label=(\roman*)]

\item $V$ is a $k$-model neighborhood of the source $K_1 \# K_2$ of $f$ with $f(K_1 \# K_2) = 0$

\item $\tilde{S} - V$ contains a $k$-model neighborhood of the saddle $J$ of $f$ with $f(J) = 0.5$ and also, $S - V$ is a stable annulus of $J$ in $\tilde{S} - V$.

\item $f_{| U_i} = {f_i}_{| U_i}$

\end{enumerate}

Therefore, $K_1 \# K_2$ is $k$-mate.

\begin{figure}[h]
\begin{center}
\includegraphics[scale = 0.4]{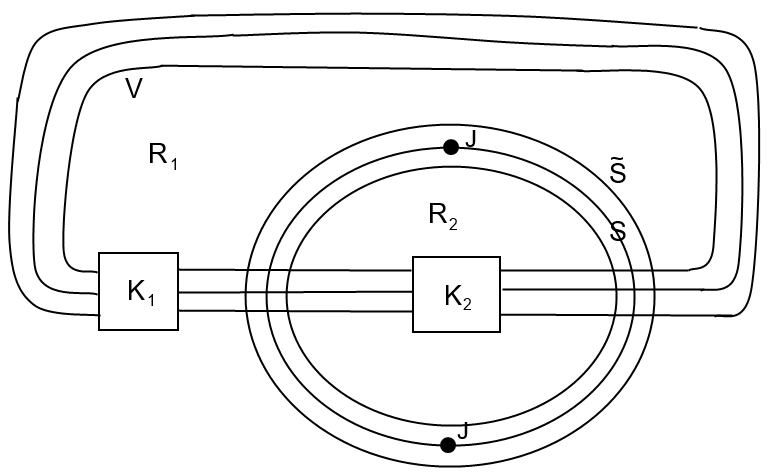}
\caption{} 
\label{sum figure}
\end{center}
\end{figure}
\end{proof}

\newpage

\section{Round handle decomposition of an ordered $k$-function}

\quad Before studying the preimage of a regular value of $f$ and how it changes when we pass a critical level, we first introduce \emph{ordered k-functions} where we make local modifications near the critical link $L$ of $f$ without changing the critical set of $f$ or the type of each component of $L$. The number $\epsilon > 0$ will denote a sufficiently small positive number throughout the text.

For a source $K$ of $f$ with local  $k$-model coordinates $(\theta, x, y)$, we can use an increasing smooth function $h: [0, 1] \rightarrow (-\infty, 0]$ with $h(z) = 0$ near $z=1$ and linear near $z=0$ to change $f$ locally by redefining $\tilde{f}(\theta,x,y) := f(\theta,x,y) + h(x^2+y^2) \leq f(\theta, x, y)$ so that $f$ can have arbitrarily small values on the source $K$. Similarly, $f$ can be redefined near a sink to have an arbitrarily large value on it. For a saddle $K$ of $f$, we can use a decreasing  (or increasing) smooth $h: [0, 1] \rightarrow [0, \epsilon]$ (or $[-\epsilon, 0]$) with small $\epsilon > 0$ and $h(z) = \pm \epsilon$ near $z=0$ and $h(z) = 0$ near $z = \pm \epsilon$. We can then redefine $f$ near $K$ as $\tilde{f}(\theta,x,y) = f(\theta,x,y) + h(x^2 + y^2)$ which changes the saddle value $f(K)$ by $\pm \epsilon$. Here, we have taken a small $\epsilon > 0$ to ensure that $\lvert h'(z) \lvert$ is also small and $K$ remains to be a saddle of $\tilde{f}$ without creating any other critical points.

\emph{An ordered $k$-function $f$} has then the following properties: 

\begin{enumerate}[label=(\roman*)]

\item The critical values of the critical knots of $f$ are all distinct. 

\item The critical values of $f$ are ordered as: $\text{source values} \leq \text{saddle values} \leq \text{sink values}$.

\end{enumerate}

Say, $a_1 < \dots < a_j < b_1 \dots < b_k < c_1  < \dots < c_m$ where $a_i, b_i$ and $c_i$  correspond to a source, a saddle and a sink of a ordered $k$-function $f$ respectively. Recall the smallness of $\epsilon$: if $z_0$ is a critical value of $f$, then $z_0$ is the only critical value of $f$ in $[z_0 - \epsilon, z_0 + \epsilon]$. We now describe a round handle decomposition \textbf{\cite{morgan}} of $S^3$  by analyzing the preimages of an ordered $k$-function $f$. Such an analysis will be repeatedly used in our proofs. 

Start with $r  < a_1$ having $f^{-1}((-\infty, r]) = \varnothing$. When we increase $r$, each time $r$ passes a source value of $f$, the preimage $f^{-1}([a_1, r])$ will have one more solid torus in $S^3$; \emph{a round 0-handle} is attached to the empty set. The region $f^{-1}([a_1, a_i + \epsilon])$ will consist of $i$ disjoint solid tori.

When we pass $b_1$, the preimage $\tilde{V} := f^{-1}([a_1, b_1 + \epsilon])$ is the union of $V:= f^{-1}([a_1, b_1 - \epsilon])$ which consists of $j$ disjoint solid tori and a region $f^{-1}([b_1 -\epsilon, b_1 + \epsilon])$. One connected component $R_1$ of this latter region is a solid torus that contains a $k$-model neighborhood of the saddle $K_1$ where $K_i$ is the saddle of $f$ with $f(K_i) = b_i$. The component $R_1$ contains a tubular neighborhood $\tilde{A}_1$ of the stable annulus of $K_1$ in $f^{-1}([b_1 -\epsilon, b_1 + \epsilon])$. For an appropriate choice of $\tilde{A}_1$, one can show that $f^{-1}([a_1, b_1 + \epsilon])$ is isotopic to $V \cup \tilde{A}_1$ in $S^3$ (see e.g. \textbf{\cite{milnor2}} for a Morse analogue of this fact). This tubular neighborhood $\tilde{A}_1$ is attached to $V$ along two disjoint annuli in $\partial V \cap \partial \tilde{A}_1$ the cores of which are isotopic to the unstable circles of $K$. The region $f^{-1}([a_1, b_1 + \epsilon])$ is topologically equivalent to the union of $f^{-1}([a_1, b_1 - \epsilon])$ and a solid torus $\tilde{A}_1$ in $S^3$ that intersect each other along two parallel annuli in $\partial \tilde{A}_1 \cap V$ the cores of which have $\pm 1$ longitude coefficients in $H_1(\partial \tilde{A}_1)$. In this case, $\tilde{A}_1$ is \emph{a round 1-handle} that is attached to $V$ along two annuli that are tubular neighborhoods of stable circles in $\partial V$. 

\begin{figure}[h]
\begin{center}
\includegraphics[scale = 0.8]{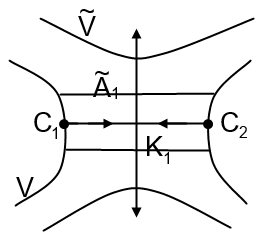}
\caption{} 
\label{saddle pass}
\end{center}
\end{figure}

The consecutive passes of saddle values of $f$ look similar. Each time we pass a saddle value $b_i$, the region $f^{-1}([a_1, b_i + \epsilon])$ is isotopic to the union $V: = f^{-1}([a_1, b_i - \epsilon])$  and a solid torus $\tilde{A}_i$  in $S^3$ where $V \cap \tilde{A}_i = \partial V \cap \partial \tilde{A}_i$ consists of two parallel annuli the cores of which have $\pm 1$ longitude coefficients in $H_1(\partial \tilde{A}_i)$. Here, the solid torus $\tilde{A}_i$ is a tubular neighborhood of a stable annulus of the saddle $K_i$. When we pass $b_i$, the boundary of the preimage changes from $\partial V$ to $\partial (V \cup \tilde{A}_i)$ by a surgery on the two stable circles  $C_1$ and $C_2$  of $K_i$ in $\partial V \cap \partial \tilde{A}_i$. Specifically, a tubular neighborhood $ \simeq S^1 \times \partial I \times I$ of $C_1$ and $C_2$ in $\partial V$ gets replaced by another two disjoint annuli $\simeq S^1 \times I \times \partial I$ which is now a tubular neighborhood of the unstable circles of $K_i$ in $\partial (V \cup \tilde{A}_1)$. They have the following identification where $I = [-1,1]$:

$S^1 \times \partial I \times (I - \lbrace 0 \rbrace) \overset{\simeq}{\longrightarrow} S^1 \times (I - \lbrace 0 \rbrace) \times  \partial I$

$(\theta, x , t y) \longrightarrow (\theta, t x , y)$ where $t \in (0,1]$ and $x,y = \pm 1$

We will use the notation $s( \cdot)$ to denote a surgered surface in $S^3$ coming from the pass of a saddle value of $f$ so that in the above situation, the surface $s(\partial V)$ is isotopic to $\partial (V \cup \tilde{A}_i)$ in $S^3$.

When we pass the first sink value $c_1$, a $k$-model neighborhood $U$ of the sink in $f^{-1}(c_1)$ fills in $V := f^{-1}([a_1, c_1 - \epsilon])$ in $S^3$. The region $f^{-1}([a_1, c_1 + \epsilon])$ is isotopic to $V \cup U$ in $S^3$ where $V \cap U = \partial V \cap \partial U$ is a torus. Here, $U$ is \emph{a round 2-handle} that is attached to $V$ along a torus.   So, the boundary of $f^{-1}([a_1, c_1 - \epsilon])$ is $m$ disjoint tori and each time we pass a sink value, one of those $m$ tori is filled in by a solid torus coming from a $k$-model neighborhood of a sink. This process ends with $f^{-1}([a_1, c_m]) = S^3$.

As we have explained above, an ordered $k$-function on $S^3$ induces a round handle decomposition of $S^3$. The converse is clear so that the correspondence between them is bijective. A knot $K$ is $k$-mate if and only if $K$ is the core of a round handle of some round handle decomposition of $S^3$ if and only if $K$ is a graph knot  \textbf{\cite{morgan}}. We will give an alternative proof of this fact but first a short remark.

Each ordered $k$-function naturally defines a Morse function on $S^3$ with the following indices of critical points: A pair of $0$ and $1$ from a source, a pair of $1$ and $2$ from a saddle and a pair of $2$ and $3$ from sink. For each such a $\lbrace j, j+1 \rbrace$ critical index pair, \emph{the attaching sphere} of the $j+1$-handle intersects \emph{the belt sphere} of the $j$-handle geometrically twice. 

The converse also holds. If $f_0: S^3 \rightarrow \mathbb{R}$ is a Morse function such that:

\begin{enumerate}[label=(\roman*)]

\item The critical values of $f_0$ are all distinct.

\item The indices of the critical points of $f_0$ come in adjacent pairs and their critical values are ordered on the real line $\mathbb{R}$ as: First $\lbrace 0,1 \rbrace$ pairs, then $\lbrace 1,2 \rbrace$ pairs and then $\lbrace 2,3 \rbrace$ pairs

\item For each $\lbrace j, j+1 \rbrace$ index pair of  critical  points $p_j$ and $p_{j+1}$ of $f_0$ respectively, the attaching sphere of the $j+1$-handle of $p_{j+1}$ intersects the belt sphere of the $j$-handle of $p_{j}$ geometrically twice. 

\end{enumerate}

Then, $f_0$ induces a ordered $k$-function $f$ on $S^3$. Each $\lbrace 0,1 \rbrace, \lbrace 1,2 \rbrace$ and $\lbrace 2,3 \rbrace$ index pair of paired critical points of $f_0$ gives rise to a source, saddle and a sink of $f$ respectively.

An alternative proof of the below lemma is in \textbf{\cite{morgan}}.

\begin{lem}  \label{torus} If $f$ is an ordered $k$-function and $r$ is a regular value of $f$, then each connected component of $f^{-1}(r)$ is an embedded torus in $S^3$.

\end{lem}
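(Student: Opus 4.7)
The plan is to induct on the number of critical values of $f$ below $r$, tracing the round handle decomposition described in the paragraphs leading up to the lemma. Set $V := f^{-1}((-\infty, r])$ so that $\partial V = f^{-1}(r)$ is the surface to analyze; it is automatically a closed smooth surface embedded in $S^3$, and orientable since it is a regular level set of a smooth real-valued function. The base case $r < a_1$ has $V = \varnothing$, and there is nothing to show.

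For the inductive step, assume $\partial V$ is a disjoint union of tori and consider a regular $r'$ just above the next critical value. If the critical value is a source, $\partial V'$ gains one torus component (the boundary of a new round $0$-handle); if it is a sink, $\partial V'$ loses one torus component (a round $2$-handle fills it in). In both cases the property is preserved at once. The substantive case is crossing a saddle $K_i$ with round $1$-handle $\tilde{A}_i$, where $\partial V' = \partial(V \cup \tilde{A}_i)$ is obtained from $\partial V$ by the surgery described in the text: the two annular neighborhoods of the stable circles $C_1, C_2 \subset \partial V$ are excised and replaced by the two complementary annuli on $\partial \tilde{A}_i$. Since every piece excised or added is an annulus, $\chi(\partial V') = \chi(\partial V) = 0$, and $\partial V'$ remains closed and orientable; it then suffices to rule out $S^2$ components (which, for a closed orientable surface of Euler characteristic zero, are the only obstructions to being a disjoint union of tori).

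The standard route is to show that $C_1$ and $C_2$ are essential simple closed curves on their respective torus components $T_1, T_2 \subset \partial V$. From the round handle description each $C_j$ is isotopic in $\tilde{A}_i$ to the saddle knot $K_i$ and has $\pm 1$ longitude coefficient on $\partial \tilde{A}_i$, so it generates $H_1(\tilde{A}_i) \cong \mathbb{Z}$. Once essentialness is secured, a direct case analysis closes the step. If $C_1, C_2$ lie on different tori, cutting each $T_j$ along $C_j$ yields an annulus, and the four annuli (two from $\partial V$, two from $\partial \tilde{A}_i$) reassemble into a single torus. If $C_1, C_2$ lie on the same torus $T$, disjointness and essentialness force them to be parallel on $T$, so cutting yields two annuli which, depending on the gluing data, reassemble into either one torus or two tori. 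In every sub-case, orientability and $\chi = 0$ force the output to be a disjoint union of tori.

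The principal obstacle is establishing essentialness of $C_1, C_2$ in $\partial V$, because this is not a local property of the $k$-model neighborhood of $K_i$: it depends on how each torus component of $\partial V$ sits in $S^3$. I would address it either by strengthening the inductive hypothesis to track a coherent longitudinal fiber class on each torus of $\partial V$ (matching the $\theta$-direction in each $k$-model and preserved by the round-handle attachments), or by a contradiction argument in which an inessential $C_j$ would yield an $S^2$ component of $\partial V'$ that, by irreducibility of $S^3$, bounds a $3$-ball $B$; since the gradient-like vector field $X$ is transverse to this sphere and all its flow lines converge to critical knots of $f$, $B$ would be forced to contain a critical knot whose stable and unstable annuli cannot be confined to $B$, the desired contradiction.
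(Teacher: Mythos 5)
Your setup---inducting through the round handle decomposition, observing that each saddle surgery preserves Euler characteristic and orientability, and thereby reducing the lemma to excluding sphere components---matches exactly how the paper's proof begins. But the heart of the matter, actually excluding spheres, is where your proposal has a genuine gap. Your primary route asserts that the stable circles $C_1, C_2$ are essential in their torus components of $\partial V$, deduced from the fact that each $C_j$ generates $H_1(\tilde{A}_i)$. That deduction does not go through: a circle can generate $H_1$ of the solid torus $\tilde{A}_i$ and still bound a disk in the torus $T_j \subseteq \partial V$ (the two ambient surfaces impose independent conditions). More importantly, the assertion itself is false in general: in Case 2 of Lemma \ref{unknot pair} the paper explicitly treats the configuration in which both stable circles bound disks $D_1, D_2$ in $f^{-1}(b_1-\epsilon)$. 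The surgery still yields tori there, not because the circles are essential, but because the two disks are forced to be \emph{nested} rather than disjoint; a sphere appears precisely when the disks are disjoint. So the statement that actually needs proving is ``the stable circles of a saddle never bound disjoint disks in $\partial V$,'' and your case analysis, which presupposes essentialness and parallelism, does not address the one configuration that must be ruled out. (Note also that the paper derives the nestedness in Lemma \ref{unknot pair} \emph{from} Lemma \ref{torus}, so it cannot be assumed here.)

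Your fallback contradiction argument points in the right direction but is not a proof as sketched. An inessential $C_j$ does not by itself yield an $S^2$ component, and the claimed contradiction---that the ball $B$ into which the gradient-like field points ``would be forced to contain a critical knot whose stable and unstable annuli cannot be confined to $B$''---is unsupported: such a ball certainly can and does contain critical knots (it must contain a sink, for instance, since $B$ is forward-invariant under the flow), and nothing prevents the annuli of later saddles from lying entirely inside $B$. The paper's actual argument is global and considerably more delicate: assuming a sphere component $\hat{S}_1$ exists at some regular level, it bounds a ball $B_1$ on the side disjoint from the lower level set; since every level above $b_k$ is a union of tori, some later saddle surgery must merge a sphere in $B_1$ with another component $\Sigma$, forcing $\Sigma$ to have genus at least $2$; this in turn forces a source inside the ball together with a new sphere bounding a strictly smaller ball separated from that source, and iterating produces infinitely many sources of $f$, a contradiction. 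Some version of this forward-tracing argument, or an equally substantive substitute, is what your proposal is missing.
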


\begin{proof} Let $a_1 < \dots < a_j < b_1 \dots < b_k < c_1  < \dots < c_m$ denote the critical values of $f$ where $a_i, b_i$ and $c_i$  correspond to a source, a saddle and a sink of $f$ respectively. The lemma is clear for $r < b_1$ or $r > b_k$. Assume now that for some $b_1 < r < b_k$, the surface $f^{-1}(r)$ has a non-torus component. Since a torus has Euler characteristic 0 and the Euler characteristic of $f^{-1}(b_i-\epsilon)$ does not change after a surgery during the pass of $b_i$,  there exists a sphere $\hat{S}_1$ in some $f^{-1}(w_1)$ where $w_1$ is a regular value of $f$.

The sphere $\hat{S}_1$ bounds a 3-ball on each side in $S^3$ and let $B_1$ denote the one of them such that $B_1 \cap f^{-1}(w_1-\epsilon) = \varnothing$. As $f^{-1}(z)$ is a union of tori for a regular value $z$ with $z > b_k$, there must be a surgery on a sphere $S_1$ isotopic to $\hat{S}_1$  in $B_1$ during the pass of a saddle value but it may happen that the surface $s(S_1)$  produced  by surgery contains a sphere in $B_1$. Take a regular value $w_2 \geq w_1$ large enough such that $f^{-1}(w_2)$ contains a sphere $S_2$ in $B_1$ but the produced surface $s(S_2)$ does not contain a sphere after the pass of a saddle value $\beta_2$. Moreover, we can find such $w_2$ and $S_2$ such that the 3-ball $B_2$ bounded by $S_2$ in $S^3$ with $B_2 \cap f^{-1}(w_2 - \epsilon) = \varnothing$ satisfies $B_2 \subseteq B_1$. 

Let $K_2$ be the saddle with $f(K_2) = \beta_2$. Let $A_2$ denote the stable annulus of $K_2$ and $\lbrace C_1, C_2 \rbrace = \partial A_2$ denote the stable circles of $K_2$ with $C_1 \subseteq S_2$. Then, $C_2$ is not in $S_2$ but in another component $\Sigma$ of $f^{-1}(\beta_2 - \epsilon)$ with genus greater than 1 because $s(S_2)$ does not contain a sphere. 

The existence of such $\Sigma$ with big genus implies that $B_2$ contains at least one source $K$ and $B_2 \cap f^{-1}(b_1-\epsilon) \neq \varnothing$. Moreover, a surgery must happen on (not necessarily distinct) tori $T_a$ and $T_b$ in $B_2$ containing the stable circles $C_a$ and $C_b$ of a saddle respectively, such that both $C_a$ and $C_b$  bound disjoint disks in $T_a$ and $T_b$ respectively. This surgery then produces also a sphere $S_3$. Say, $S_3 \subseteq f^{-1}(w_3)$. Let $B_3$ denote the 3-ball bounded by $S_3$ in $S^3$ such that $B_3 \cap f^{-1}(w_3-\epsilon) = \varnothing$. We can find such $S_3$ and $B_3$ such that $B_3 \subseteq B_2$ and $K \subseteq B_2 - B_3$. We can now apply our last argument to $S_3 = \partial B_3$ instead of $S_1 = \partial B_1$  to conclude that $B_3$ contains at least one source $J$ with $J \neq K$, $B_4 \subseteq B_3$ and $J \subseteq B_3 - B_4$ where $B_4$ is a 3-ball in $S^3$ and the sphere $\partial B_4$ is in the preimage of a regular value of $f$. Therefore, $B_1$ contains infinitely many sources of $f$ and we have reached the desired contradiction.
\end{proof}

\newpage

\section{Characterization of $k$-mate knots}

\quad Suppose that $K_1$ and $K_2$ are two unknots such that $K_1$ is a cable knot of $K_2$. Since both $K_1$ and $K_2$ are unknots, $K_2$ is then a cable knot of $K_1$ as well. If it is a trivial cabling or $K_2$ is a longitude of $K_1$, then $K_1 \cup K_2$ is a split link of two unknots. If $lk( K_1, K_2) = \pm 1$, then $K_1 \cup K_2$ is a Hopf link. The below lemma exposes such cabled two unknots $K_1$ and $K_2$ in generality but we will encounter many split links of two unknots or Hopf links in its proof. 

\begin{lem} \label{unknot pair} Suppose that $K$ is a critical unknot of an ordered $k$-function $f$ and $K_R$ is the unknot core of an unknotted solid torus $R$ in $S^3$ such that $K \cap R = \varnothing$ and also, $\partial R$ is in the preimage of a regular value of $f$. Then, the unknots $K$ and $K_R$ are cable knots of each other. \end{lem}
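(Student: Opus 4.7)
The plan is to use the round handle decomposition induced by $f$ on the Heegaard half $R' := S^3 \setminus \operatorname{int}(R)$ (an unknotted solid torus, since $R$ is unknotted) to exhibit the link $K \cup K_R$ as a cable link, via an induction on the number of saddle values of $f$ below the regular value $r$ with $\partial R \subseteq f^{-1}(r)$. First I would reduce to the case where $K$ is a source of $f$: if $K$ is a sink, replace $f$ by $-f$, which swaps sources and sinks but preserves the critical link and every regular preimage, in particular $\partial R$; if $K$ is a saddle, apply Lemma \ref{source} in a tubular neighborhood of $K$ that I may take disjoint from $R$ (possible because $K \cap R = \varnothing$), producing an isotopic source while leaving $R$, $\partial R$, and $r$ untouched.

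For the induction, the base case is when there are no saddle values below $r$. In that case the connected component $W$ of $f^{-1}((-\infty, r])$ containing $K$ is precisely the round $0$-handle of $K$, an unknotted solid torus with $\partial W$ a standardly embedded torus in $f^{-1}(r)$, and an extension of Lemma \ref{no saddle} in the spirit of its proof (tracking the flow of a gradient-like vector field from $\partial U$ to $\partial R$) forces the relevant critical knots of $f$ to sit in a Hopf configuration compatible with $R$, so that $K \cup K_R$ is a Hopf link and each component is the meridian cable of the other. In the inductive step I would consider the top saddle value below $r$ and argue, using the round handle surgery description after Lemma \ref{torus}, that its round $1$-handle either (i) admits a cancellation against an adjacent round $0$- or $2$-handle, giving a new $k$-function with one fewer saddle below $r$ to which the induction hypothesis applies, or (ii) performs a boundary surgery that splits off an inessential torus bounding a solid torus inside $R'$, leaving the main cobordism between $\partial U$ (the boundary of the $k$-model neighborhood of $K$) and $\partial R$ intact, so the induction again applies after removing this saddle from consideration. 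The symmetric statement that $K_R$ is a cable of $K$ follows by running the same analysis with $-f$, using a small $k$-model neighborhood of $K$ in the role of $R$.

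The hardest part will be establishing the cancellation or the alternative splitting in the inductive step: a round $1$-handle need not be in cancellable position, and the sequence of boundary surgeries can in principle produce tori of various embedding types in $R'$. The essential use of the hypotheses is that $K$ is an unknot (so $\partial U$ is a standardly embedded torus), $K_R$ is an unknot (so $\partial R$ is standardly embedded), and both $R$ and $R'$ are unknotted solid tori forming a genus-$1$ Heegaard splitting of $S^3$; these constraints should force every intermediate torus produced by the surgeries to be either parallel in $R'$ to $\partial U$ and $\partial R$ or to bound a solid torus properly inside $R'$, so that at each stage of the induction the required cancellation or splitting is available and the cobordism carrying $K$ onto a curve on $\partial R$ (and symmetrically $K_R$ onto a curve on $\partial U$) persists.
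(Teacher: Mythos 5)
Your overall strategy (induct on the saddles, analyze the round-handle attachments) points in the right direction, but two of your load-bearing steps fail. First, your base case is not actually a base case. If there are no saddle values below $r$, then $\partial R$ is the boundary of some round $0$-handle $W_P$ of a source $P$; when $R = W_P$ with $P \neq K$, you are left having to show that the two distinct critical unknots $K$ and $P$ are cables of each other, and that still requires analyzing all the saddles of $f$, which in your setup all lie above $r$. ``Tracking the flow from $\partial U$ to $\partial R$'' does not resolve this: the upward flow from $\partial U$ runs into those saddles, and there is no reason for $K \cup P$ to be a Hopf link (it can be a split link, $P$ can be a longitude of $K$, etc.). The induction has to be on the total number of saddles of $f$, with the genuinely trivial base case supplied by Lemma \ref{no saddle}; the inductive step must produce a new ordered $k$-function with strictly fewer saddles by absorbing the \emph{lowest} saddle together with its adjacent sources into a single round $0$-handle, while exhaustively tracking where $K$ and $R$ sit relative to the absorbed region. (Your preliminary reduction via Lemma \ref{source} is also counterproductive: it inserts an extra saddle, disrupting the induction count, and the paper handles $K$ of arbitrary type directly.)

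Second, the claim you flag as the hard part is false as stated: the tori produced by the surgeries need not be parallel to $\partial U$ or $\partial R$, nor bound solid tori inside $R'$. When both stable circles of the lowest saddle are nontrivial, non-meridian cables of their sources (longitude coefficients of absolute value at least $2$), the sublevel region $E_1 \cup E_2 \cup \tilde{A}$ is a Seifert fibered space over a disk with two singular fibers --- not a solid torus --- and its complement $V$ is a tubular neighborhood of a nontrivial torus knot. No round-handle cancellation or ``splitting off an inessential torus'' is available there. The paper handles this by a Seifert-fibration classification argument showing the two sources form a Hopf link, and then, when $K$ and $R$ lie in the knotted solid torus $V$, by re-embedding $V$ as a standard unknotted solid torus via $g : V \rightarrow S^3$ and applying the induction hypothesis to the transplanted function on $g(V)$. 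This re-embedding device, together with the case analysis of the stable circles (bounding disks, meridians, longitudes, or essential cables) and of the possible locations of $K$ and $R$, is the real content of the proof and is absent from your proposal.
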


\begin{proof}  If $f$ has no saddles, then Lemma \ref{no saddle} shows that $K \cup K_R$ is a Hopf  or a split link of two unknots. Assume now that $f$ has a saddle and let $a_1 < \dots < a_j < b_1 < \dots < b_k < c_1  < \dots < c_m$ denote the critical values of $f$ where $a_i, b_i$ and $c_i$  correspond to a source, a saddle and a sink of $f$ respectively. Let $K_1$ be the saddle of $f$ with $f(K_1) = b_1$. We will induct on the number of saddles $k$ by analyzing the stable circles $C_1$ and $C_2$ of $K_1$ in $f^{-1}(b_1 - \epsilon)$ and also a stable annulus $A$ of $K_1$ in $f^{-1}([b_1 - \epsilon, b_1])$. We may assume $r \notin [b_1 - \epsilon, b_1 + \epsilon]$ where $\partial R \subseteq f^{-1}(r)$. The circles $C_{1}$ and $C_{2}$ are cable knots of (not necessarily distinct)  sources $P_1$ and $P_2$ respectively. Let $E_i$ denote the solid torus component of $f^{-1}([a_1, b_1-\epsilon])$ containing $P_i$. Let $E$ denote the component of $f^{-1}([a_1, b_1+\epsilon])$ which contains $P_1 \cup P_2 \cup K_1$. In each case below, we will define an ordered $k$-function $f_1$ with at most $k-1$ saddles.

\begin{case} Only one of $C_{1}$ and $C_{2}$ bounds a disk in $f^{-1}(b_1-\epsilon)$. \end{case}

Say, $C_{1}$ bounds a disk $D$ in $f^{-1}(b_1-\epsilon)$ so that $K_1 \simeq C_1$ is an unknot. Then, $P_2$ cannot be nontrivial. Otherwise,  $C_{2}$ must be a meridian of $P_2$ and $P_2$ will intersect the sphere, which is the union of $D$, $A$ and a meridian disk of $P_2$,  geometrically once. As $H_2(S^3) = 0$, such a single geometric intersection of a 1-cycle and a 2-cycle of $S^3$ is not possible. So, $P_2$ is an unknot and $C_{2}$ is a longitude of $P_2$ as it bounds the disk $D \cup A$ in the complement of $P_2$. 

We consider the situation $P_1 \neq P_2$ first. The region $E$ is then isotopic to $E_1$ in $S^3$. If $K$ is equal $K_1$ or $P_2$, then $K$ is contained in a small 3-ball $B$ containing the disk $A \cup D$ such that $B \cap K_R = \varnothing$. The link $K \cup K_R$ is then a split link of two unknots. 

Assume now that $K$ is distinct from $K_1$ and $P_2$. We define a $k$-function $f_1$ by $f_1(p) := f(p)$ for $p \notin E$ so that $E$ becomes a $k$-model neighborhood of the source $P_1$ of $f_1$. Then, $f_1$ is ordered because the region $f^{-1}((-\infty,b_1+\epsilon])$ contains just a single saddle of $f$. If $E \cap R = \varnothing$, then $r$ is a regular value of $f_1$ with $\partial R \subseteq f_1^{-1}(r)$. An application of the induction hypothesis to the critical unknot $K$ of $f_1$ and the unknotted solid torus $R$ proves the lemma.

If $R \subseteq E$, then $R$ is a tubular neighborhood of $P_2$ or an unknot $P_1$. If $R$ is a tubular neighborhood of $P_2$, then $K_R \simeq P_2$ is contained in a 3-ball not containing $K$ and $K \cup K_R$ is a split link of two unknots. If $R$ is a tubular neighborhood of an unknot $P_1$, then $K \neq P_1$. Also, $R$ is isotopic to $E$ in $S^3$ and $K \nsubseteq E$ as $K$ is distinct from $K_1$ and $P_2$. So, the induction hypothesis applies to the critical unknot $K$ of $f_1$ and the unknotted solid torus $E$ to prove the lemma in this situation.

We now prove the lemma for the situation $P_1 = P_2$. Then, $E$ is a solid torus and $P_1 \cup K_1$ is a split link of two unknots. If $K$ is equal to $P_1$ or $K_1$, then $K$ is inside a 3-ball $B$ in $E$ such that $B \cap K_R = \varnothing$ so that $K \cup K_R$ is a split link of two unknots. Assume now $K \nsubseteq E$.  Let $K_E$ denote the core of $E$. We define $f_1$ by $f_1(p) := f(p)$ for $p \notin E$ so that $E$ becomes a $k$-model neighborhood of the source $K_E$ of $f_1$.  If $E \cap R=\varnothing$, we can apply the induction hypothesis as before. If $R \subseteq E$, then $R$ is either a $k$-model neighborhood of $P_1$ or $K_E$ is an unknot and both $R$ and $E$ are tubular neighborhoods of $K_E$. In the former case,  $K_R$ is contained in a 3-ball within $E$ not containing $K$ so that $K \cup K_R$ is a split link of two unknots. In the latter case, we apply the induction hypothesis just as before where $K \cup K_E \simeq K \cup K_R$.

\begin{case} Both $C_{1}$ and $C_{2}$ bound disks $D_1$ and $D_2$ in $f^{-1}(b_1-\epsilon)$ respectively. \end{case}

Then, $K_1$ is an unknot saddle. The disks $D_1$ and $D_2$ cannot be disjoint since otherwise, $f^{-1}(b_1 + \epsilon)$ would contain a sphere contradicting Lemma \ref{torus}. Therefore, $P_1 = P_2$. Say, $D_2 \subseteq D_1$. Let $A_1$ denote the annulus $D_1 - \text{Int}(D_2)$. The torus $T_0 = A_1 \cup A$ separates $S^3$ into two closed regions and let $R_0$ denote the one of them such that $ \text{Int} (R_0) \cap E_1 = \varnothing$.  Similarly, let $\tilde{R}_0$ denote the the component of $S^3 - \text{Int}(E)$ such that $\tilde{R}_0$ is isotopic to $R_0$ in $S^3$.

As $R_0$ in $S^3$ is bounded by a torus, it is diffeomorphic to the complement of a knot $K_0$ in $S^3$ (after smoothing the corners of $T_0$). Take a small 3-ball identified with $D_2 \times [0,1]$ coming from the push off of the disk $D_2$ into the exterior of $E_1$ in a normal direction so that $D_2 \times \lbrace 0 \rbrace := D_2 \subseteq \partial E_1$ and $\partial D_2 \times [0,1] \subseteq A$.  We can first take $K_0$ to be the union of a properly embedded arc in $A_1$ and another properly embedded arc in $A$. We can then slightly push off this union of two arcs into the exterior of $R_0$ in a normal direction to achieve $K_0 \cap R_0 = \varnothing$. Then, $S^3 - R_0$ is a tubular neighborhood of $K_0$ and $D_2 \times \lbrace 0 \rbrace$ is a meridian disk of $K_0$. Therefore, $B_0 := R_0 \cup D_2 \times [0,1]$ is diffeomorphic to a 3-ball which intersects $E_1$ in the disk $D_1$. Hence, the region $E_1 \cup R_0 \cup D_2 \times [0,1]$ is isotopic to $E_1$ in $S^3$ and similarly, so is the region $E \cup \tilde{R}_0$. So, the surface $s(\partial E_1)$, which has two components, has one component isotopic to $\partial E_1$ and another component isotopic to $T_0$ in $S^3$.

If $K = K_1$, then $K$ can be isotoped along $A$ into the 3-ball  $D_2 \times [0,1]$ and we can easily isotope $K_R$ out of this 3-ball if necessary without removing $K$ from that 3-ball. So, $K \cup K_R$ is a split link of two unknots. We will assume $K \neq K_1$ from now on.

If $(K \cup R) \cap (E \cup \tilde{R}_0) = \varnothing$, we define $f_1$ by $f_1(p) := f(p)$ for $p\notin E \cup \tilde{R}_0$ so that $E \cup \tilde{R}_0$ becomes a $k$-model neighborhood of the source $P_1$ of $f_1$. An application of the induction hypothesis to the critical unknot $K$ of $f_1$ and the solid torus $R$ proves the lemma.

If $(K \cup R) \subseteq \tilde{R}_0$, we define $f_1$ by $f_1(p) := f(p)$ for $p \in \tilde{R}_0$ so that $S^3 - \tilde{R}_0$ becomes a $k$-model neighborhood of the source $K_0$ of $f_1$. We can then apply the induction hypothesis just as before. 

Assume now that only one of $R$ and $K$ is inside $\tilde{R}_0$ and the other is outside $\tilde{R}_0$. Say, $K_a \subseteq \tilde{R}_0$ where $\lbrace K_a, K_b \rbrace = \lbrace K_R , K \rbrace$. Then, $K_a$ is contained in the 3-ball $B_0$ but $K_b$ is not so that $K \cup K_R$ is a split link of two unknots.

The final possible situation is that only one of $R$ and $K$ is inside $E$ but none of them are inside $\tilde{R}_0$. Then, $P_1$ is either equal to $K$ or isotopic to $K_R$ within $R$ and in the latter case, we may assume $P_1 = K_R$. Say, $K_a = P_1$ where $\lbrace K_a, K_b \rbrace = \lbrace K_R , K \rbrace$. Then, $K_b \nsubseteq E \cup\tilde{R}_0$. We define $f_1$ by  $f_1(p) := f(p)$ for $p\notin E \cup\tilde{R}_0$ so that $E \cup\tilde{R}_0$ becomes a $k$-model neighborhood of the source $P_1$ of $f_1$. An application of the induction hypothesis proves the lemma.

\begin{case} None of  $C_{1}$ and $C_{2}$ bounds a disk in $f^{-1}(b_1-\epsilon)$. \end{case}

\begin{subcase} Both $C_{1}$ and $C_{2}$ bound meridian disks $D_1$ and $D_2$ of $P_1$ and $P_2$ respectively. \end{subcase}

Then, $K_1$ is an unknot saddle. The sources $P_1$ and $P_2$ are equal since otherwise, $P_1$ would intersect the sphere $S_1 := D_1 \cup D_2 \cup A$ geometrically once in $S^3$. The sphere $S_1$ yields $P_1 \simeq P_a \# P_b$ and the region $S^3 - E$ has two components $R_a$ and $R_b$ which are isotopic to the complement of $P_a$ and $P_b$ in $S^3$ respectively. 

We consider the situation $K = K_1$ first. If $R \subseteq E$, then $P_1$ is an unknot and $R$ is a tubular neighborhood of it. We see that $K \cup K_R$ is a Hopf link in this setting. If $R \nsubseteq  E$, then say $R \subseteq R_b$. Since $K$ is in the 3-ball $B_a$ bounded by $S_1$ and containing the region $R_a$ but $K_R$ is not in $B_a$, the link $K \cup K_R$ is a split link of two unknots. We will assume $K \neq K_1$ from now on.

Assume $R \subseteq E$ so that $R$ is a tubular neighborhood of the unknot $P_1$ and $P_a$ and $P_b$ are unknots as well. As $K \neq K_1$, the unknot $K$ is either in $R_a$ or $R_b$. Say, $K \subseteq R_a$. We define $f_1$ by $f_1(p) := f(p)$ for $p \in R_a$ so that $S^3 - R_a$ becomes a $k$-model neighborhood of the unknot source $P_a$ of $f_1$. We can now apply the induction hypothesis to the critical unknot $K$ of $f_1$ and a $k$-model neighborhood of the source $P_a$ of $f_1$ to conclude that $P_a$ and $K$ are cable knots of each other. The lemma then follows because $P_a$ can be isotoped to $K_R$ within $S^3 - R_a$ so that $P_a \cup K \simeq K_R \cup K$.

If $R \nsubseteq E$, then say $R \subseteq R_b$. If $K \subseteq R_b$, we define $f_1$ by $f_1(p) := f(p)$ for $p \in R_b$ so that $S^3 - R_b$ becomes a $k$-model neighborhood of the source $P_b$ of $f_1$. An application of the induction hypothesis proves the lemma. If $K \subseteq R_a$, then $K$ is in the 3-ball $B_a$ not containing $R$ so that $K \cup K_R$ is a split link of two unknots. If $K \nsubseteq R_a \cup R_b$, then $K= P_1$ because $K \neq K_1$ as well. This final situation is similar to the previous situation where $R \subseteq E$ and $K \subseteq R_a \cup R_b$.

\begin{subcase} Only $C_1$ bounds a meridian disk $D_1$ of $P_1$. \end{subcase}

Then, $K_1$ is an unknot saddle. The sources $P_1$ and $P_2$ are distinct since $C_2$ is a non-meridian, nontrivial cable knot of $P_2$. Since $C_2$ bounds the disk $D_1 \cup A_1$, both $C_2$ and $P_2$ are unknots and $C_2$ is a longitude of $P_2$. Also, $P_2$ is a meridian of $P_1$ and $E$ is isotopic to $E_1$ in $S^3$.

We first consider the situation where $K$ is equal to $K_1$ or $P_2$. If $R \subseteq E$, then $R$ is a tubular neighborhood of either $P_1$ or $P_2$. If $R\supseteq P_2$, then $K \cup K_R \simeq C_2 \cup P_2$ is a split link of two unknots. If $R \supseteq P_1$, then $P_1$ is an unknot and $K \cup K_R \simeq C_1 \cup P_1$ is a Hopf link. If $R \nsubseteq E$, then $K$ is contained in a small 3-ball $B$ inside $E$ with $K_R \cap B = \varnothing$ so that $K \cup K_R$ is a split link of two unknots. We will assume that $K$ is distinct from $K_1$ and $P_2$ from now on. 

Assume $R \nsubseteq E$. We define $f_1$ by $f_1(p) := f(p)$ for $p \notin E$ so that $E$ becomes a $k$-model neighborhood of the source $P_1$ of $f_1$. The induction hypothesis can then be applied as before. 

Assume now $R \subseteq E$ so that $R$ is a tubular neighborhood of either $P_1$ or $P_2$. If $K = P_1$, then $K_R$ is isotopic to $P_2 $ within $R$ and $K \cup K_R \simeq P_1 \cup  P_2$ is a Hopf link. Assume now $K \cap  E = \varnothing$. If $R \supseteq P_2$, then $K_R$ is isotopic to $P_2$ within $R$ where $P_2$ is in the 3-ball $B$ not containing $K$ so that $K \cup K_R \simeq K \cup P_2$ is a split link of two unknots. If $R \supseteq P_1$, we define $f_1$ by $f_1(p) := f(p)$ for $p \notin E$ so that $E$ becomes a $k$-model neighborhood of the unknot source $P_1$ of $f_1$. The induction hypothesis can be applied as before.

\begin{subcase} None of $C_1$ and $C_2$ is a meridian of $P_1$ and $P_2$ respectively. \end{subcase}

The isotopic cable knots $C_1$ and $C_2$ are, say, $C_1 \simeq (P_1)_{p,q}$ and $C_2 \simeq (P_2)_{r,s}$ where $p, r \neq 0$ as $C_i$ is not a meridian of $P_i$. We will first consider the situation $P_1 \neq P_2$. Let $\tilde{A}$ be a closed tubular neighborhood of the stable annulus $A$ so that the annulus $\tilde{C}_i := \tilde{A} \cap E_i$ becomes a tubular neighborhood of $C_i$ in $\partial E_i$ and also, $E$ is isotopic to $E_1 \cup E_2 \cup \tilde{A}$ in $S^3$. The boundary of $E_1 \cup E_2 \cup \tilde{A}$ is a torus which comes from the union of the two annuli $\partial E_i - \text{Int}(\tilde{C_i})$ ($i = 1,2$) and the two annuli that are parallel copies of $A$ in $\tilde{A}$. Let $R_1 := S^3 - \text{Int}(E_1 \cup E_2 \cup \tilde{A})$.

If $p = \pm 1$, then $P_1$ is isotopic to $C_1$ in $E_1$. As $C_1$ and $C_2$ are isotopic, we get $P_1 \succ P_2$ (i.e., $P_1$ is a cable knot of $P_2$).  Similarly, if $r = \pm 1$, then $P_2 \simeq K_1$ and $P_2 \succ P_1$. In these situations, $E$ is isotopic to $E_1$ (when $P_2 \succ P_1$) or $E_2$ (when $P_1 \succ P_2$) in $S^3$.

We will now prove that $P_1 \cup P_2$ is a Hopf link when $p,r \neq 0, \pm 1$. The solid torus $E_1$ admits a Seifert fibration with a single singular fiber $P_1$ of multiplicity $\lvert p \rvert$ so that the annulus $\tilde{C_1}$ becomes a union of regular fibers because $C_1$ is not a meridian of $P_1$. This fibration on $\tilde{C_1}$ extends to a regular Seifert fibration on $\tilde{A}$ because $C_1$ is a cable knot of $K_1$ with $C_1 \simeq (K_1)_{\pm 1, \beta}$.  We may assume that $\tilde{C}_2 \subseteq \partial \tilde{A}$ is a union of regular fibers since the cable knots $C_1$ and $C_2$ of $K_1$ have the same slope. This fibration on $\tilde{C}_2$ can then be extended to a Seifert fibration of $E_2$ with a single singular fiber $P_2$ of multiplicity $\lvert r \rvert$ because $C_2$ is not a meridian of $P_2$. So, the region $E_1 \cup E_2 \cup \tilde {A}$ becomes a Seifert fibered manifold over a disk with two singular fibers of multiplicities $\lvert p \rvert$ and $\lvert r \rvert$. The torus $\partial (E_1 \cup E_2 \cup \tilde {A})$ bounds $E_1 \cup E_2 \cup \tilde {A}$ and $R_1$ in $S^3$ at least one of which must be a solid torus. Since $\pi_1(E_1 \cup E_2 \cup \tilde {A}, *) \simeq <z, w ; z^p = w^r> \not\simeq \mathbb{Z}$, the region $R_1$ must be a solid torus. A regular fiber in $\partial (E_1 \cup E_2 \cup \tilde {A}) = \partial R_1$ is nontrivial there. It cannot bound a meridian disk in $R_1$ since otherwise $\pi_1(S^3,*) = \pi_1(E_1 \cup E_2 \cup \tilde {A} \cup R_1, *) \simeq <z, w ; z^p = w^r=1>\not\simeq 1$. Therefore, the Seifert fibration on $\partial R_1$ can be extended into $R_1$ with at most one singular fiber so that we obtain a Seifert fibration of $S^3 = E_1 \cup E_2 \cup \tilde {A} \cup R_1$ over a sphere with two or three singular fibers. It follows now from the classification of Seifert fibered manifolds that $S^3$ cannot have a Seifert fibration with three singular fibers over a sphere but only two so that $R_1$ must have a regular Seifert fibration (see e.g. \textbf{\cite{orlik}} or \textbf{\cite{hatcher}}). Moreover, if one takes the base sphere as the union of two disks each of which contains a point corresponding to a singular fiber $P_1$ or $P_2$, then those two disks will correspond to two complementary solid tori in $S^3$ so that the cores $P_1$ and $P_2$ of those two complementary solid tori form a Hopf link in $S^3$.

We first consider the cases $P_1 \succ P_2$ or $P_2 \succ P_1$ where $E$ is isotopic to $E_1$ or $E_2$ in $S^3$. Say, $P_a \succ P_b$ where $\lbrace P_a , P_b \rbrace = \lbrace P_1, P_2 \rbrace$. Assume $R \nsubseteq E$. We define $f_1$ by $f_1(p) := f(p)$ for $p \notin E$ so that $E$ becomes a $k$-model neighborhood of the source $P_b$ of $f_1$. If $K$ is distinct from $P_a$ and $K_1$, then an application of the induction hypothesis proves the lemma. If $K$ is equal to $P_a$ or $K_1$, then $P_b$ is also an unknot because $P_a \simeq K_1$ is a nontrivial, non-meridian cable knot of $P_b$. Moreover, $P_a$ is isotopic to $P_b$ within $E$. An application of the induction hypothesis to the critical unknot $P_b$ of $f_1$ and $R$ proves the lemma because $K \cup K_R \simeq P_b \cup K_R$.

Assume now $R \subseteq E$. Then, $R$ is a tubular neighborhood of (possibly both) $P_a$ or $P_b$. In either case, $P_b$ is an unknot because $P_a$ is a nontrivial, non-meridian cable knot of $P_b$. Also, $K_R$ is isotopic to $P_b$ within $E$. If $K \subseteq E$, then $K$ is equal to $P_a$, $P_b$ or $K_1$ and also, $K \cup K_R \simeq P_a \cup P_b$ which proves the lemma. If $K \nsubseteq E$, we define $f_1$ by $f_1(p):= f(p)$ for $p \notin E$ so that $E$ becomes a $k$-model neighborhood of the source $P_b$ of $f_1$. We apply the induction hypothesis just as before.

We consider the Hopf link $P_1 \cup P_2$ case now. We still have both $P_1 \succ P_2$ and $P_2 \succ P_1$ but $E$ is no longer isotopic to $E_1$ or $E_2$ in $S^3$. The torus knot $C_1 \simeq (P_1)_{p,q}$ is nontrivial since $p \neq 0, \pm 1$. The region $V := S^3 - \text{Int}(E)$ is a solid torus the core of which is isotopic to $C_1 \simeq K_1$. 

Assume $K \cup R \subseteq V$. Since the core of $V$ is nontrivial, each of the unknots $K$ and $K_R$ is contained in a 3-ball $B_K$ and $B_R$ inside $V$ respectively. Let $g: V \rightarrow S^3$ be an embedding such that $g(V)$ is a standard, unknotted solid torus in $S^3$. Then, both $g(K)$ and $g(K_R)$ are unknots since each of $K$ and $K_R$ is contained in a 3-ball inside $V$. We define $f_1$ by $f_1(p) := f(g^{-1}(p))$ for $p \in g(V)$ so that $S^3 - g(V)$ becomes a $k$-model neighborhood of an unknot source $K_g$ of $f_1$.  The unknots $g(K_R)$ and $g(K)$ are then cable knots of each other by the induction hypothesis. Therefore, so is the link $g^{-1}(g(K) \cup g(K_R)) = K \cup K_R$.

Assume now that only one of $K$ and $R$ is inside $V$. The one inside $V$ is then contained in a 3-ball not containing the other one so that $K \cup K_R$ is a split link of two unknots. The final remaining case is $K \cup R \subseteq E$ where $K \cup K_R \simeq P_1 \cup P_2$ is a Hopf link.

We will now prove this subcase of the lemma for the situation $P_1 = P_2$. Let $\tilde{A}$, $\tilde{C}_i$ and $C_1 \simeq (P_1)_{p,q}$ \ $(p \neq 0)$ be just as before where $C_1$ is now isotopic to $C_2$ in $\partial E_1$. The nontrivial circles $C_1$ and $C_2$ separates $\partial E_1$ into two closed annuli $A_1$ and $A_2$ and the components of $s(\partial E_1)$ are isotopic to the tori $\Sigma_1 := A_1 \cup A$ and $\Sigma_2 := A_2 \cup A$ in $S^3$. Let $H_i$ denote the closed region bounded by $\Sigma_i$ in $S^3$ such that $\text{Int}(H_i) \cap E_1 = \varnothing$. Similarly, let $\tilde{H}_i$ denote the component of $S^3 - \text{Int}(E)$ that is isotopic to $H_i$  in $S^3$. 

Assume that $H_1$ is not a solid torus. Then, $E_1 \cup H_2$ bounded by $\Sigma_1$ is a solid torus.  If $C_1$ bounds a disk in $E_1 \cup H_2$, then $C_1$ is the meridian of the core of $E_1 \cup H_2$ and also a longitude of the unknot  $P_1$ because $C_1$ is a nontrivial, non-meridian cable knot of $P_1$. However, $H_1$ would be an unknotted solid torus in this case. Hence, $C_1$ does not bound a disk in $E_1 \cup H_2$ so that  $E_1 \cup H_2$ admits a Seifert fibration with at most one single singular fiber where the annuli $A_1$ and $A$ in its boundary become a union of regular fibers. As $\partial A_2 = \partial A_1$ consists of two regular fibers, the annulus $A_2$ can then be isotoped into $\partial (E_1 \cup H_2)$ relative to its boundary in the Seifert fibered solid torus $E_1 \cup H_2$ so that $H_2$ is also a solid torus isotopic to $E_1 \cup H_2$. 

Therefore, at least one of $H_1$ and $H_2$, say $H_1$, is a solid torus. Let $K_H$ denote the core of both $H_1$ and $\tilde{H}_1$. The union $E_1 \cup H_1$ of two solid tori intersecting each other at an annulus in their boundaries is then similar to the union $E_1 \cup E_2 \cup \tilde{A}$ in our previous situation $P_1 \neq P_2$. Therefore, either $P_1 \cup K_H$ is a Hopf link with $C_1$ being a nontrivial torus knot or one of  the knots $P_1$ and $K_H$ is a cable knot of the other one. In the latter case, $E_1 \cup H_1$ is isotopic to $E_1$ or $H_1$ in $S^3$. In the former Hopf  link case, the region $H_2$ is also a solid torus the core of which is isotopic to $C_1$. 

We have $S^3 = E \cup \tilde{H}_1 \cup \tilde{H}_2$ where the interiors of those three regions are disjoint. There are various possibilities about where $K$ and $R$ might be. We start with the  assumption $R \cup K \subseteq \tilde{H}_2$. The case where $P_1 \cup K_H$ is a Hopf link and $C_1$ is a nontrivial torus knot has already been analyzed in the ``Hopf link $P_1 \cup P_2$"  situation before and the lemma holds in this case. If $P_1 \succ K_H$ or $K_H \succ P_1$ and also, $E_1 \cup H_1$ is isotopic to $E_1$ or $H_1$ in $S^3$, we define  $f_1$ by $f_1(p) := f(p)$ for $p \notin E \cup \tilde{H}_1$ so that $E \cup \tilde{H}_1$ becomes a $k$-model neighborhood of the source $P_1$ or $K_H$ of $f_1$. An application of the induction hypothesis proves the lemma.

Assume now $R \cup K \subseteq \tilde{H}_1$. The case where $K_H$ is nontrivial has already been analyzed in the ``Hopf link $P_1 \cup P_2$" situation before and the lemma holds in this case. If $K_H$ is an unknot, we apply the induction hypothesis more directly by simply defining $f_1$ by $f_1(p) := f(p)$ for $p \in \tilde{H}_1$ so that $S^3 - \tilde{H}_1$ becomes a $k$-model neighborhood of an unknot source of $f_1$.

Assume now that only one of $R$ and $K$ is in $\tilde{H}_1$ and the other one is in $\tilde{H}_2$. Say, $K_a \subseteq \tilde{H}_1$ where $\lbrace K_a, K_b \rbrace = \lbrace K_R, K \rbrace$. If $K_H$ is nontrivial so that $\tilde{H}_1$ is a knotted solid torus, then $K \cup K_R$ is a split link of two unknots. If $K_H \cup P_1$ is a Hopf link and $C_1$ is a nontrivial torus knot, then $\tilde{H}_2$ is a knotted solid torus and $K \cup K_R$ is again a split link of two unknots. The remaining situation is that $K_H$ is trivial and $E \cup \tilde{H}_1$ is isotopic to $\tilde{H}_1$ in $S^3$. Also, $\tilde{H}_2$ is an unknotted solid torus in $S^3$. Here, each of the unknots $K$ and $K_R$ are in one of the two complementary unknotted solid tori $E \cup \tilde{H}_1$ and $\tilde{H}_2$ but not in the same one. Let $J_H$ denote the unknot core of $\tilde{H}_2$ so that $K_H \cup J_H$ is a Hopf link. We define $f_1$ by $f_1(p) := f(p)$ for $p \in \tilde{H}_1$ so that $E \cup \tilde{H}_2$ becomes a $k$-model neighborhood of the source $J_H$ of $f_1$.  An application of the induction hypothesis shows that $K_a$ is a cable knot of $J_H$. Since $K_H \cup J_H$ is a Hopf link, $K_a$ is a cable knot of $K_H$ as well. A similar argument shows that the other unknot $K_b$  is a cable knot of $K_H$ as well. Since $K_a \subseteq \tilde{H}_1$ but $K_b \nsubseteq \tilde{H}_1$, the unknot $K_a$ can be isotoped into an arbitrarily small tubular neighborhood of $K_H$ without affecting $K_b$. Therefore, the unknots in $K_a \cup K_b = K_R \cup K$ are cable knots of each other as well. 

The only possibility we haven't considered so far is that $R$ or $K$ is inside $E$. If $K \cup R \subseteq E$, then $R$ is a tubular neighborhood of $P_1$ and $K = K_1$. The lemma follows from $K \cup K_R \simeq C_1 \cup P_1$ and $C_1 \succ P_1$. Assume now that only one of $K$ and $R$ is inside $E$.  If $R \subseteq E$, we may assume $K_R = P_1$. Say, $K_a \subseteq E$ where $\lbrace K_a , K_b \rbrace = \lbrace K, K_R \rbrace$. Then $K_a$ is equal to $P_1$ or $K_1$ and in the latter case, $P_1$ is also an unknot because the unknot $K_1$ is a nontrivial, non-meridian cable knot of $P_1$. 

First assume $K_b \subseteq \tilde{H}_2$. If $P_1 \cup K_H$ is a Hopf link and $C_1$ is a nontrivial torus knot, then $\tilde{H}_2$ is a knotted solid torus and $K \cup K_R$ is a split link of two unknots. Otherwise, $E \cup \tilde{H}_1$ is isotopic to $E_1$  in $S^3$ and we define $f_1$ by $f_1(p) := f(p)$ for $p \in \tilde{H}_2$ so that  $E \cup \tilde{H}_1$ becomes a $k$-model neighborhood of the unknot source $P_1$ of $f_1$. The unknots in $P_1 \cup K_b$ are then cable knots of each other by the induction hypothesis. So are the unknots in $K_1 \cup K_b$ because $K_1$ can be isotoped into an arbitrarily small neighborhood of $P_1$ without affecting $K_b$. Since $K_R \cup K$ is either equal to $P_1 \cup K_b$ or $K_1 \cup K_b$, the lemma is proven in this situation.

Assume now $K_b \subseteq \tilde{H}_1$. If $K_H$ is nontrivial, then $K \cup K_R$ is a split link of two unknots. If $P_1 \cup K_H$ is a Hopf link, the unknots $K$ and $K_R$ are in two complementary unknotted solid tori but not in the same one and the lemma has been proven in this situation above. The remaining case is that $K_H$ is trivial and $E \cup \tilde{H}_1$ is isotopic to both $E_1$ and $\tilde{H}_1$ in $S^3$. In this case, $\tilde{H}_2$ is a standard solid torus and $P_1 \cup J_H \simeq K_H \cup J_H$ is a Hopf link where $J_H$ is the core of  $\tilde{H}_2$.  We define $f_1$ by $f_1(p) := f(p)$ for $p \in E \cup \tilde{H}_1$ so that  $\tilde{H}_2$ becomes a $k$-model neighborhood of the unknot source $J_H$ of $f_1$. An application of the induction hypothesis shows $K_b \succ J_H$ so that $K_b$  can be isotoped into $\partial \tilde{H}_1$ within $ \tilde{H}_1$. Therefore, $K_b \succ P_1$ because $P_1 \cup J_H$ is a Hopf link. We now see $K_b \succ K_a$ as well even when $K_a \neq P_1$ since in this case $K_a = K_1$ and, $K_b = K_R$ can be isotoped into an arbitrarily small tubular neighborhood of $P_1$ without affecting $K_1$. This completes the proof of the lemma.
\end{proof}

We will take the following elementary description as a definition of a graph knot \textbf{\cite{gordon}}. 

\begin{defn} \label{graph knot} Let $\mathbb{S}_0 := \lbrace \text{unknot} \rbrace$. To define $\mathbb{S}_n$ inductively ($n \in \mathbb{N}$), assume that  $\mathbb{S}_k$ is defined for $0 \leq k < n$ and let $\mathbb{S}_n$ denote the set of cable knots of $P$ where $P$ is a connected sum of knots in $\mathbb{S}_{n-1}$. A knot in $\mathbb{S} := \cup_{k \in \mathbb{N}} \ \mathbb{S}_k$ is called \emph{a graph knot}.
\end{defn}

We have the following facts about the set of graph knots $\mathbb{S}$: The set $\mathbb{S}_1$ is the set of (trivial or nontrivial) torus knots. Since the cable knot $(K)_{1,r}$ of any knot $K$ is isotopic to $K$, we have $\mathbb{S}_{n+1} \supseteq \mathbb{S}_{n}$ for all $n \in \mathbb{N}$. If $\lbrace K_1 , \dots , K_m \rbrace \subseteq \mathbb{S}_n$, then $K_1 \# \cdots \# K_m \simeq (K_1 \# \cdots \# K_m)_{1,r} \in \mathbb{S}_{n+1}$. If we have a sequence of cable knots $U \prec K_1 \prec K_2 \prec \cdots \prec K_m$ where $U$ is an unknot, then $K_i \in \mathbb{S}_i$ for $1 \leq i \leq m$.

\begin{defn} Suppose that $K$ is a graph knot in $\mathbb{S}_n$. If $n = 0$, we define \emph{the graph knot kit} or shortly \emph{the graph kit of $K$}  to be the empty set. For $n>0$,  fix a (not necessarily unique) expression of $K$ with $K \simeq (P)_{q,r}$ and $P \simeq P_{K,1} \# \cdots \# P_{K,m}$ where $P_{K,i} \in \mathbb{S}_{n-1}$ for $1 \leq i \leq m$. We define $\Gamma(K)$ corresponding to this fixed expression of $K$ by $\Gamma(K) := \lbrace P_{K,1} , \dots , P_{K,m} \rbrace$. Let $\Phi_1 := \Gamma(K)$. For $1 \leq j < n$, we define $\Phi_{j+1}$ inductively by $\Phi_{j+1} := \Phi_j \cup \lbrace J : \ J \in \Gamma(H) \text{\ where \ } H \in \Phi_j \rbrace$ where the elements  $P_{K,i} \in \Gamma(K)$ and $P_{J,s} \in \Gamma(J)$ are distinct elements of $\Phi_{j+1}$ whenever $K$ and $J$ are distinct in $\Phi_j$. Then, we say that $\Phi_n$ is \emph{a graph kit of $K$} and also, $K$ is \emph{woven from the graph knots in $\Phi_n$}. 
\end{defn} 

Note that even when the expression $K \simeq (P)_{q,r}$ and $P \simeq P_{K,1} \# \cdots \# P_{K,m}$ of $K \in \mathbb{S}_n$ is unique, we can consider $K$ in $\mathbb{S}_{n+1} \supseteq \mathbb{S}_n$ and may produce a different graph kit of $K$. Also, distinct graph knots in a graph kit can be isotopic. For example, for the torus knots $T_{2,3}$ and $T_{2,5}$ in $\mathbb{S}_1$, the set $\Phi := \lbrace T_{2,3} , T_{2,5}, \text{unknot}_{2,3}, \text{unknot}_{2,5} \rbrace$ is a graph kit of $T_{2,3} \# T_{2,5}$ (where $\Phi$ is valid for any orientations of $T_{2,3}$ and $T_{2,5}$ defining  $T_{2,3} \# T_{2,5}$). 

If $\Phi$ is a finite collection of unoriented knots, then $\underset{J \in \Phi}{\#} J$ denotes a connected sum of the knots in $\Phi$ which are assigned an arbitrary orientation before their connected sum is taken. If $\Phi$ is a graph kit of some graph knot and $P$ is a nontrivial graph knot in $\Phi$, then $\Gamma_\Phi(P)$ will denote the finite set of graph knots such that $P$ is a cable knot of a connected sum of the graph knots in $\Gamma_\Phi(P)$ and also, $\Gamma_\Phi(P) \subseteq \Phi$. In this case, the orientations of the knots in the connected sum $\underset{J \in \Gamma_\Phi(P)}{\#} J$ are not arbitrary but in such a way  so that the graph knot $P$ becomes a cable knot of the graph knot $\underset{J \in \Gamma_\Phi(P)}{\#} J$.

\begin{lem} \label{thread} A graph knot $K$ is $k$-mate. \end{lem}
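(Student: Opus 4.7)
The plan is to argue by induction on the smallest integer $n$ such that $K \in \mathbb{S}_n$, using only the three building-block lemmas already in hand: Lemma \ref{no saddle} (which supplies the unknot as a $k$-mate knot), Lemma \ref{cable} (which closes the class of $k$-mate knots under cabling), and Lemma \ref{sum} (which closes it under connected sum).

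For the base case $n = 0$, the knot $K$ is an unknot, and Lemma \ref{no saddle} exhibits the unknot as a component of the critical link of a $k$-function (the two Hopf-linked cores produced there are both unknotted), so $K$ is $k$-mate. For the inductive step, suppose every knot in $\mathbb{S}_{n-1}$ is $k$-mate, and let $K \in \mathbb{S}_n$. By the definition of $\mathbb{S}_n$ we may write $K \simeq (P)_{q,r}$ with $P \simeq P_1 \# \cdots \# P_m$ and each $P_i \in \mathbb{S}_{n-1}$. By the inductive hypothesis each $P_i$ is $k$-mate. Applying Lemma \ref{sum} inductively on $m$ (combining $P_1 \# \cdots \# P_{m-1}$, already $k$-mate, with $P_m$), we conclude that $P$ is $k$-mate. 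Then one application of Lemma \ref{cable} to the cable knot $K$ of the $k$-mate knot $P$ finishes the inductive step.

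There is essentially no obstacle here: once Lemmas \ref{source}, \ref{no saddle}, \ref{cable}, \ref{sum} are in place, the inductive definition of $\mathbb{S} = \bigcup_n \mathbb{S}_n$ is tailored precisely to the closure properties of the class of $k$-mate knots. The only point worth mentioning explicitly is the orientation subtlety recorded in the paragraph preceding Lemma \ref{sum}: since connected sums are taken in the unoriented category when working with $k$-functions, the connected sums appearing in the definition of $\mathbb{S}_n$ and in Lemma \ref{sum} refer to the same class of unoriented knots, so no mismatch arises when iterating Lemma \ref{sum}. The converse direction of Theorem \ref{k-mate} (that every $k$-mate knot is a graph knot) is a separate matter and is not needed for this lemma.
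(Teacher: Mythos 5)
Your proof is correct and follows essentially the same route as the paper: induct on $n$ with $K \in \mathbb{S}_n$, handle the base case via Lemma \ref{no saddle}, and in the inductive step apply Lemma \ref{sum} to the summands in $\Gamma(K)$ and then Lemma \ref{cable} to the resulting connected sum. The paper's proof is just a terser version of the same argument.
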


\begin{proof} Say, $K \in \mathbb{S}_n$. If $n = 0$, then $K$ is an unknot which is $k$-mate by Lemma \ref{no saddle}. Assume now that $K$ is nontrivial and $n > 0$. We will induct on $n$. Each graph knot in $\Gamma(K)$ is $k$-mate by the induction hypothesis. Applications of Lemma \ref{cable} and Lemma \ref{sum} to the graph knots in $\Gamma(K)$ show that $K$ is $k$-mate. 
\end{proof}

\begin{thm} \label{thread kit}

Suppose that $f$ is an ordered $k$-function. Then, every critical knot $K$ of $f$ is a graph knot. Moreover, there exist a graph kit $\Phi$ of $K$ such that:

\begin{enumerate}[label=(\roman*)]

\item Each graph knot $P$ in $\Phi$ is isotopic to the core of a solid torus $R_P$ where $\partial R_P \subseteq f^{-1}(r)$ for some regular value $r$ of $f$.

\item For each nontrivial graph knot $P$ in $\Phi$, there exists a solid torus $R_{\Gamma(P)}$  such that the core of $R_{\Gamma(P)}$ is isotopic to $\underset{J \in \Gamma_\Phi(P)}{\#} J$  and also, $\partial R_{\Gamma(P)}\subseteq f^{-1}(r)$ for some regular value $r$ of $f$. Moreover, the core of $R_P$ can be isotoped into $\partial R_{\Gamma(P)}$ in $S^3$.

\item There exists a solid torus $R_{\Gamma(K)}$ such that the core of $R_{\Gamma(K)}$ is isotopic to  $\underset{J \in \Gamma_\Phi(K)}{\#} J$ and also, $\partial R_{\Gamma(K)}\subseteq f^{-1}(r)$ for some regular value $r$ of $f$. Moreover, $K$ can be isotoped into $\partial R_{\Gamma(K)}$.

\end{enumerate}

\end{thm}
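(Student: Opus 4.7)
My plan is to induct on the number $k$ of saddles of $f$, mirroring the inductive structure of Lemma \ref{unknot pair} but now tracking an entire graph kit together with its solid-torus witnesses, rather than just a pair of unknots.

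In the base case $k=0$, Lemma \ref{no saddle} identifies the critical set of $f$ as two unknots forming a Hopf link, so $\Phi := \varnothing$ works: each critical knot is the core of its own $k$-model neighborhood, whose boundary torus lies in $f^{-1}(r)$ for a regular value $r$ close to the critical value. Conditions (ii) and (iii) are then vacuously satisfied since there are no nontrivial graph knots in $\Phi$.

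For the inductive step, fix a critical knot $K$ of $f$, let $K_1$ be the lowest saddle, and let $C_1, C_2$ be its stable circles sitting on the boundaries of the solid tori $E_1, E_2 \subseteq f^{-1}([a_1, b_1 - \epsilon])$ containing (not necessarily distinct) sources $P_1, P_2$. Following the same case analysis as Lemma \ref{unknot pair} (whether each $C_i$ bounds a disk in $\partial E_i$, is a meridian of $P_i$, or is a non-meridian nontrivial cable of $P_i$), I would in each case construct an ordered $k$-function $f_1$ with $k-1$ saddles by locally modifying $f$ only within $f^{-1}((-\infty, b_1 + \epsilon])$. The critical set of $f_1$ is that of $f$ with $K_1$ removed and with $P_1, P_2$ possibly replaced by a single merged source (for example, a connected sum $P_1 \# P_2$, a cable of one of them, or a fresh unknot, depending on the case). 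The essential point is that any witness $R_P$ or $R_{\Gamma(P)}$ produced by applying the inductive hypothesis to $f_1$ has its boundary in $f_1^{-1}(r)$ for some regular value $r > b_1 + \epsilon$, where $f_1 \equiv f$, so the witnesses transfer verbatim to $f$.

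When $K$ is distinct from $K_1, P_1, P_2$ the induction on $f_1$ supplies the required kit directly. When $K = K_1$, I use that $K_1$ is isotopic to $C_1 \simeq (P_1)_{p,q}$: I apply induction to $f_1$ to obtain a kit for the merged source that contains or refines to $P_1$, then set $\Gamma(K_1) := \lbrace P_1 \rbrace$ (or fall back to the kit of $P_1$ if $p = \pm 1$, in which case $K_1 \simeq P_1$); the torus $\partial E_1 \subseteq f^{-1}(b_1 - \epsilon)$ already serves as $\partial R_{\Gamma(K_1)}$. When $K$ is $P_1$ or $P_2$, I extract its kit from the merged source's kit by a symmetric refinement dictated by the case. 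The main obstacle will be Subcase 3 of Lemma \ref{unknot pair}, specifically the Hopf-link configuration with $C_1$ a nontrivial torus knot and the $P_1 = P_2$ variant, where one must rerun the Seifert fibration argument of Lemma \ref{unknot pair} to decide which complementary regions of the surgered torus $s(\partial V)$ are solid tori and thus qualify as witnesses. A second subtlety is preserving the nesting condition (ii) through the induction: I must verify that the specific solid torus chosen to witness the merged source in $f_1$ embeds into the appropriate witness $R_{\Gamma(K_1)}$ or $R_{\Gamma(P_i)}$ for $f$, so that the cores can be isotoped into the required boundary tori.
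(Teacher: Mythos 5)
Your overall strategy --- induction on the number of saddles, reusing the case analysis of Lemma \ref{unknot pair} while carrying solid-torus witnesses --- is the paper's strategy, and your treatment of Cases 1, 2 and the first two subcases of Case 3 matches the paper in outline. But there is a genuine gap in the case you yourself flag as the main obstacle, and your proposed fix does not address it. In Subcase 3 with $p, r \neq 0, \pm 1$, the region $E \simeq E_1 \cup E_2 \cup \tilde{A}$ is a Seifert fibered space over the disk with two singular fibers, hence not a solid torus, so $P_1$ and $P_2$ cannot be ``replaced by a single merged source'' by a modification supported in $f^{-1}((-\infty, b_1+\epsilon])$; rerunning the Seifert fibration argument only tells you that $V := S^3 - \text{Int}(E)$ is a solid torus whose core $K_V$ is a nontrivial torus knot, and it does not produce an $f_1$ on the same $S^3$. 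When $K \subseteq V$, the paper's resolution is to re-embed $V$ by a map $g: V \rightarrow S^3$ onto a standard unknotted solid torus, extend $f \circ g^{-1}$ to an ordered $k$-function $f_1$ with fewer saddles for which $S^3 - g(V)$ is a $k$-model neighborhood of an unknot source $K_g$, and apply the induction hypothesis to $g(K)$. This breaks your claim that witnesses ``transfer verbatim'': the ambient isotopy class of every knot in the kit changes under $g^{-1}$, and in particular an unknot $K_P$ in the kit for $g(K)$ can pull back to a nontrivial knot $g^{-1}(K_P)$.

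This is exactly where Lemma \ref{unknot pair} and the witness conditions (i)--(iii) earn their keep: because each such $K_P$ is the core of a solid torus whose boundary lies in a regular level of $f_1$, Lemma \ref{unknot pair} shows $K_P$ is a cable knot of $K_g$, hence $g^{-1}(K_P)$ is a cable knot of the torus knot $K_V$ and therefore a graph knot; the paper then rebuilds the kit by adjoining copies of $K_V$ and $P_1$ with witnesses $V$ and $E_1$. Without this step the induction does not close, since being a graph knot is not preserved under re-embedding a knotted solid torus. The same mechanism is needed again in the $P_1 = P_2$ variant when $K \subseteq \tilde{H}_1$. Your proposal should be amended to include the re-embedding argument and the explicit appeal to Lemma \ref{unknot pair} to certify the pulled-back kit; as written, the inductive step fails precisely in the configuration that motivates the strengthened statement of the theorem.
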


\begin{rem} Part \emph{(i)} of Theorem \ref{thread kit} does not say that a saddle $K$ of an ordered $k$-function $f$ is the core of a solid torus $R$ where $\partial R \subseteq f^{-1}(r)$ for some regular value $r$ of $f$ but only that there exists a graph kit $\phi$ of $K$ such that this is true for every graph knot in $\phi$. However, $K$ is not in $\phi$. 

\end{rem}

\begin{proof} There is nothing to prove if $K$ is trivial because we can take the empty set as a graph kit of $K$. If $f$ has no saddles, then $f$ has a single unknot source and a single unknot sink which form a Hopf link by Lemma \ref{no saddle} and the theorem holds in this case. Assume now that $K$ is nontrivial so that $f$ has saddles. Let $a_1 < \dots < a_j < b_1 < \dots < b_k < c_1  < \dots < c_m$ denote the critical values of $f$ where $a_i, b_i$ and $c_i$  correspond to a source, a saddle and a sink of $f$ respectively. We will apply the proof technique in Lemma \ref{unknot pair} to induct on the number $k$ of saddles of $f$. As we will cover the similar cases or subcases, we will omit some details which can be found in that proof.

Let $K_1$ be the saddle of $f$ with $f(K_1) = b_1$. Let $A$ be the stable annulus of $K_1$ in $f^{-1}([b_1-\epsilon,b_1])$ and $C_1$ and $C_2$ be the stable circles of $K_1$ in $f^{-1}(b_1 - \epsilon)$. The circles $C_{1}$ and $C_{2}$ are cable knots of (not necessarily distinct)  sources $P_1$ and $P_2$ respectively.  Let $E_i$ denote the solid torus component of $f^{-1}([a_1, b_1-\epsilon])$ containing $P_i$. Let $E$ denote the component of $f^{-1}([a_1, b_1+\epsilon])$ which contains $P_1 \cup P_2 \cup K_1$.

\begin{case} Only one of $C_{1}$ and $C_{2}$ bounds a disk in $f^{-1}(b_1-\epsilon)$. \end{case}

Say, $C_{1}$ bounds a disk $D$ in $f^{-1}(b_1-\epsilon)$ Then, $K_1$ and $P_2$ are unknots so that $K$ is distinct from $K_1$ and $P_2$ and also, $C_{2}$ is a longitude of $P_2$. 

If $P_1 \neq P_2$, then $E$ is isotopic to $E_1$ in $S^3$. We define $f_1$ by $f_1(p) := f(p)$ for $p \notin E$ so that $E$ becomes a $k$-model neighborhood of the source $P_1$ of $f_1$. We apply the induction hypothesis to the critical knot $K$ of $f_1$ to conclude that $K$ is a graph knot and also, there exists a graph kit $\Phi$ of $K$ satisfying the properties stated in the theorem  for $f_1$. We can then take this graph kit $\Phi$ of the critical knot $K$ of $f$ to prove the theorem.

If $P_1 = P_2$, then $E$ is a solid torus not containing $K$. Let $K_E$ denote the core of $E$. We define $f_1$ by $f_1(p) := f(p)$ for $p \notin E$ so that $E$ becomes a $k$-model neighborhood of the source $K_E$ of $f_1$. The rest of the proof continues just as in the previous $P_1 \neq P_2$ situation.

\begin{case} Both $C_{1}$ and $C_{2}$ bound disks $D_1$ and $D_2$ in $f^{-1}(b_1-\epsilon)$ respectively. \end{case}

Then, $K_1$ is an unknot so that $K \neq K_1$. The disks $D_1$ and $D_2$ are not disjoint and say, $D_2 \subseteq D_1$. Let $A_1$ denote the annulus $D_1 - \text{Int}(D_2)$. The torus $T_0 = A_1 \cup A$ separates $S^3$ into two closed regions and let $R_0$ denote the one of them such that $ \text{Int} (R_0) \cap E_1 = \varnothing$. Similarly, let $\tilde{R}_0$ denote the component of $S^3 - \text{Int}(E)$ such that $\tilde{R}_0$ is isotopic to $R_0$ in $S^3$. The region $R_0$ is diffeomorphic to the complement of a knot $K_0$ in $T_0$. Since $E_1 \cup R_0$ is isotopic to $E_1$ in $S^3$, we can define an ordered $k$-function $\tilde{f}$ by modifying $f$ within a small neighborhood $U$ of $R_0$ such that $\tilde{f}(p) = f(p)$ for $p \notin U$ and also, $\tilde{f}$ does not have any critical points in $U$. So, $\tilde{f}(p) = f(p)$ for $p \notin f^{-1}([b_1-\epsilon,b_1+\epsilon])$ and the saddle $K_1$ of $f$ is removed.

If $K \nsubseteq R_0$, we define $f_1$ by $f_1(p) := \tilde{f}(p)$. The induction hypothesis applies to the critical knot $K$ of $f_1$ so that $K$ is a graph knot and also, there exists a graph kit $\Phi$ of $K$ such that the solid tori corresponding to the graph knots in $\Phi$ satisfy the properties stated in the theorem for $f_1$. For the boundary of one those solid tori lying in $f_1^{-1}(r)$ for a regular value $r$ of $f_1$, we may assume that $r \notin [b_1-\epsilon,b_1+\epsilon]$. That boundary will then be inside $f^{-1}(r)$ as well so that the graph kit $\Phi$ of $K$ satisfies the properties stated in the theorem for $f$ as well.

If $K \subseteq R_0$, we define $f_1$ by $f_1(p) := f(p)$ for $p \in \tilde{R}_0$ so that $S^3 - \tilde{R}_0$ becomes a $k$-model neighborhood of the source $K_0$ of $f_1$. We apply the induction hypothesis just as before to conclude that $K$ is a graph knot and also, there exists a graph kit $\Phi$ of $K$ satisfying the properties stated in the theorem for $f_1$. That graph kit will satisfy those properties for $f$ as well except that there may be a tubular neighborhood of $K_0$ associated to a graph knot in $\Phi$. This tubular neighborhood may not exactly work for $f$ but it is isotopic to an appropriate tubular neighborhood $S^3 - \text{Int}(\tilde{R}_0)$ coming from $f$.

\begin{case} None of  $C_{1}$ and $C_{2}$ bounds a disk in $f^{-1}(b_1-\epsilon)$. \end{case}

\begin{subcase} Both $C_{1}$ and $C_{2}$ bound meridian disks $D_1$ and $D_2$ of $P_1$ and $P_2$ respectively. \end{subcase}

Then, $K_1$ is an unknot with $K \neq K_1$. We have $P_1 = P_2$ and the sphere $S_1 := D_1 \cup D_2 \cup A$ yields $P_1 \simeq P_a \# P_b$. The region $S^3 - E$ has two components $R_a$ and $R_b$ which are isotopic to the complement of $P_a$ and $P_b$ in $S^3$ respectively. 

Assume $K \neq P_1$. Say, $K \subseteq R_a$.  We define $f_1$ by $f_1(p) := f(p)$ for $p \in R_a$ so that $S^3 - R_a$ becomes a $k$-model neighborhood of the source $P_a$ of $f_1$. We then apply the induction hypothesis just as before.

Assume now $K = P_1$. We can use $f_1$ above and similarly define $f_2$ for the regions $R_b$ and $S^3 - R_b$ to conclude that $P_a$ and $P_b$ are graph knots and also, there exist graph kits $\Phi_a$ and $\Phi_b$ of $P_a$ and $P_b$ respectively such that $\Phi_a$ and $\Phi_b$ satisfy the properties stated in the theorem for $f_1$ and $f_2$ respectively. Since $P_1 \simeq P_a \# P_b$, the source $P_1$ is also a graph knot and $\Phi := \Phi_a \cup \Phi_b \cup \lbrace P_a, P_b \rbrace$ is a graph kit of $P_1$. 

For each graph knot $P$ in $\Phi_a \cup \Phi_b$, we already have a solid torus $R_P$ or $R_{\Gamma(P)}$ associated to it. The boundaries $\partial R_P$ or $\partial R_{\Gamma(P)}$ are then in $f^{-1}(r)$ for some regular value $r$ of $f$ except possibly when $R_P$ or $R_{\Gamma(P)}$ is a tubular neighborhood of $P_a$ or $P_b$ but this problem in those exceptional cases can be easily resolved by just picking a more appropriate tubular neighborhood $R_P$ or $R_{\Gamma(P)}$ of $P_a$ or $P_b$ in the beginning. For the graph knots $P_a$ and $P_b$ in $\Phi$, we associate the solid tori $E \cup R_b$ and $E \cup R_a$ to $R_{P_a}$ and $R_{P_b}$ respectively. Finally, we regard $K$ as the cable knot $(K)_{1,q}$ of itself and define $R_{\Gamma(K)} := E_1$. The collection of all these solid tori associated to the graph knots in the graph kit $\Phi$ of $K$ satisfies then the properties stated in the theorem for $f$. 

\begin{subcase} Only $C_1$ bounds a meridian disk $D_1$ of $P_1$. \end{subcase}

Then, $K_1$ is an unknot so that $K \neq K_1$. The sources $P_1$ and $P_2$ are distinct since $C_2$ is a non-meridian, nontrivial cable knot of $P_2$. Also, $P_2$ is an unknot and $C_2$ is a longitude of $P_2$. The region $E$ is isotopic to $E_1$ in $S^3$. This subcase is then similar to Case 1 and the proof in this case can be completed similarly. 

\begin{subcase} None of $C_1$ and $C_2$ is a meridian of $P_1$ and $P_2$ respectively. \end{subcase}

We will first consider the situation $P_1 \neq P_2$. The isotopic cable knots $C_1$ and $C_2$ are, say, $C_1 \simeq (P_1)_{p,q}$ and $C_2 \simeq (P_2)_{r,s}$ where $p, r \neq 0$ as $C_i$ is not a meridian of $P_i$.

If $p$ or $r$ is equal to $1$, then $P_1 \succ P_2$ or $P_2 \succ P_1$ and $E$ is a tubular neighborhood of $P_1$ or $P_2$. Say, $P_a \succ P_b$ where $\lbrace P_a , P_b \rbrace = \lbrace P_1, P_2 \rbrace$. Let $\lbrace E_a, E_b \rbrace := \lbrace E_1, E_2 \rbrace$ be such that $E_a$ and $E_b$ are tubular neighborhoods of $P_a$ and $P_b$ respectively. We define $f_1$ by $f_1(p) := f(p)$ for $p \in (S^3-E) \cup E_b$ so that $E$ becomes a $k$-model neighborhood of the source $P_b$ of $f_1$. If $K$ is distinct from $K_1$ and $P_a$, we can then apply the induction hypothesis to critical knot $K$ of $f_1$ to prove the theorem. If $K$ is equal to $P_a$ or $K_1$, then $K \succ P_b$. We apply the induction hypothesis to the source $P_b$ of $f_1$ to conclude that $P_b$ is a graph knot and there exists a graph kit $\Phi_b$ of $P_b$ satisfying the properties stated in the theorem for $f_1$. Since $K \succ P_b$, the knot $K$ is also a graph knot and also, $\Phi_b \cup \lbrace P_b \rbrace$ is a graph kit of $K$. The solid tori $R_P$ or $R_{\Gamma(P)}$ is already defined for a graph knot $P$ in $\Phi_b$. We define $R_{P_b} := E_b$ and $R_{\Gamma(K)} := E_b$. The collection of these solid tori satisfies then the properties stated in the theorem for $f$. 

If $p,r \neq 0,\pm 1$, then $P_1 \cup P_2$ is a Hopf link so that nontrivial $K$ is distinct from $P_1$ and $P_2$. If $K = K_1$, the saddle $K$ is a torus knot. The graph kit $\lbrace P_1 \rbrace$ of $K_1$ together with the solid tori $R_{P_1} := E_1$ and $R_{\Gamma(K)} := E_1$ proves the theorem. Assume now that $K$ is inside the solid torus $V: = S^3 - \text{Int}(E)$ where the core $K_V$ of $V$ is a nontrivial torus knot $\simeq (P_1)_{p,q}$. 

This is the situation where we will need the fact from Lemma \ref{unknot pair} and also, the utility of a graph kit satisfying the properties stated in the theorem as we now embed $V$ into $S^3$ by $g: V \rightarrow S^3$ such that $g(V)$ becomes a standard, unknotted solid torus in $S^3$. We define $f_1$ by $f_1(p) := f(g^{-1}(p))$ for $p \in g(V)$ so that $S^3 - g(V)$ becomes a $k$-model neighborhood of an unknot source $K_g$ of $f_1$. 

If $g(K)$ is trivial, then $K_g$ and $g(K)$ are cable knots of each other by Lemma \ref{unknot pair}. Therefore, $K$ is a nontrivial, non-meridian cable knot of $K_V$. The graph kit $\lbrace K_V, P_1 \rbrace$ of $K$ together with the solid tori $R_{K_V} := V, \ R_{\Gamma(K_V)} := E_1, \ R_{P_1} := E_1$ and $R_{\Gamma(K)} := V$ proves the theorem.

Assume now that $g(K)$ is nontrivial. An application of the induction hypothesis to the critical knot $g(K)$ of $f_1$ shows that $g(K)$ is a graph kit and also, it produces a graph kit $\Phi_g$ of $g(K)$ satisfying the properties stated in the theorem for $f_1$. For $P$ in $\Phi_g$, let $R_P$ and $R_{\Gamma(P)}$ (when $P$ is nontrivial) be the solid tori as stated in the theorem. Let $R_{\Gamma(g(K))}$ be the solid torus for $\Gamma_{\Phi_g}(g(K))$ as stated in the theorem. If $R_P$ is not a tubular neighborhood of $K_g$, we can assume  $R_P \subseteq g(V)$. If $R_P$ is a tubular neighborhood of $K_g$, then the standard solid torus $R_P$ can be replaced by the standard solid torus $g(V)$ because the unknotted solid tori $R_P$ and $g(V)$ are isotopic in $S^3$ and also, a cable knot of the unknot core of $R_P$ is a cable knot of the unknot core of $g(V)$ as well. Hence, we can assume that $R_P \subseteq g(V)$ for each $P \in \Phi_g$. Similarly, we can assume that $R_{\Gamma(P)} \subseteq g(V)$ for each nontrivial $P \in \Phi_g$ and also, $R_{\Gamma(g(K))} \subseteq g(V)$. 

Let $K_P$ denote the core of $R_P$ for $P \in \Phi_g$. Since the graph kit $\Phi_g$ is a collection of isotopy classes of knots, our desired knot  $g^{-1}(K_P)$ is \emph{not defined}. However, the knot $g^{-1}(K_P)$ is \emph{well defined} and will do the job. If $K_P$ is an unknot, then Lemma \ref{unknot pair} asserts that $K_P$ is a cable knot of $K_g$. Therefore, the knot $g^{-1}(K_P)$, which is the core of the solid torus $g^{-1}(R_P)$, is a cable knot of $K_V$ so that it is a graph knot. Let $\Phi_1 := \lbrace g^{-1}(K_P) : P \in \Phi_g \rbrace$. The properties of $\Phi_g$ stated in the theorem imply that every knot in $\Phi_1$ is a graph knot because $g^{-1}(K_P)$ is a graph knot for every unknot $P$ in $\Phi_g$.  

Let $\Phi_0 : = \lbrace P \in \Phi_g  : K_P \text{ is trivial but } g^{-1}(K_P) \text{ is not trivial}  \rbrace$. Let $\Phi_2 := \Phi_1 \cup \lbrace (K_V)_P : P \in \Phi_0 \rbrace \cup \lbrace (P_1)_P : P \in \Phi_0 \rbrace$, where the latter two sets contain just distinct copies of the same knots $K_V$ and $P_1$.  For each nontrivial graph knot $g^{-1}(K_P)$ in $\Phi_1$, we take $\Gamma(g^{-1}(K_P))$ as $\lbrace g^{-1}(K_J) : J \in \Gamma_{\Phi_g}(P) \rbrace$ (when $P$ is nontrivial) or $\lbrace (K_V)_P \rbrace$ (when $P$ is trivial). We also take $\Gamma((K_V)_P)$ as $\lbrace (P_1)_P \rbrace$. So, we have $\Gamma(J) \subseteq \Phi_2$ when $J$ is a nontrivial graph knot in $\Phi_2$. To each graph knot $g^{-1}(K_P)$ in $\Phi_1$, we associate the solid torus $g^{-1}(R_P)$. To each graph knot $(K_V)_P$ or $(P_1)_P$ in $\Phi_2$, we associate the solid tori $V$ or $E_1$ respectively. When $g^{-1}(K_P)$ in $\Phi_2$ is nontrivial, we take the solid torus $g^{-1}(R_{\Gamma(P)})$ (when $P$ is nontrivial) or $V$ (when $P$ is trivial) for $R_{\Gamma(g^{-1}(K_P))}$. Finally, we define $R_{\Gamma((K_V)_P)} := E_1$ for $(K_V)_P \in \Phi_2$ and $R_{\Gamma(K)} := g^{-1}(R_{\Gamma(g(K))})$. The collection of these solid tori associated to the trivial or nontrivial graph knots in $\Phi_2$ satisfies the properties stated in the theorem for the ordered $k$-function $f$.  These properties of the solid tori imply now that the knot $g^{-1}(g(K)) = K$ is a graph knot that is woven from the graph knots in $\Phi_2$.

We will now prove the theorem for the situation $P_1 = P_2$. The nontrivial circles $C_1$ and $C_2$ separates $\partial E_1$ into two closed annuli $A_1$ and $A_2$ and the components of $s(\partial E_1)$ are isotopic to the tori $\Sigma_1 := A_1 \cup A$ and $\Sigma_2 := A_2 \cup A$ in $S^3$. Let $H_i$ denote the closed region bounded by $\Sigma_i$ in $S^3$ such that $\text{Int}(H_i) \cap E_1 = \varnothing$. Similarly, let $\tilde{H}_i$ denote the component of $S^3 - \text{Int}(E)$ that is isotopic to $H_i$ in $S^3$. Then, at least one of $H_1$ and $H_2$, say $H_1$, is a solid torus. Let $K_H$ denote the core of $H_1$. The link $P_1 \cup K_H$ is either a Hopf link with $C_1$ being a nontrivial torus knot or one of $P_1$ and $K_H$ is a cable knot of the other one. In the latter cable knot cases, the region $H_1 \cup E_1$ is isotopic to $H_1$ or $E_1$ in $S^3$.

There are now several possibilities for the location of $K$ in regard of $\tilde{H}_1 \cup E \subseteq S^3$. We start with the assumption $K \nsubseteq \tilde{H}_1 \cup E$. If $P_1 \succ K_H$ or $K_H \succ P_1$, let $\lbrace K_a, K_b \rbrace := \lbrace P_1, K_H \rbrace$ be such that $K_a \succ K_b$. We define $f_1$ by $f_1(p) := f(p)$ for $p \notin \tilde{H}_1 \cup E$ so that $\tilde{H}_1 \cup E$ becomes a $k$-model neighborhood of the source $K_b$ of $f_1$. An application of the induction hypothesis proves the theorem. If $P_1 \cup K_H$ is a Hopf link, the region $S^3 - \tilde{H}_1 \cup E$ is a tubular neighborhood of a nontrivial torus knot isotopic to $K_1$ and we have already proven the theorem in this setting which was analyzed in the situation $P_1 \neq P_2$.

Assume now $K \subseteq E$ so that $K$ is equal to $P_1$ or $K_1$. We first consider $K = P_1$. As $K$ is nontrivial, $P_1 \cup K_H$ cannot be a Hopf link so that $H_1 \cup E_1$ is isotopic to $H_1$ or $E_1$ in $S^3$. Let $\lbrace K_a, K_b \rbrace$ and $f_1$  be defined just as in the previous paragraph. The induction hypothesis applies to the source $K_b$ of $f_1$ so that $K_b$ is a graph knot and also, there exists a graph kit $\Phi_b$ of $K_b$ satisfying the properties stated in the theorem  for $f_1$. If $K = K_b$, then this graph kit $\Phi_b$ works for $f$ as well. Otherwise, $K \succ K_b$ so that $K$ is a graph knot and $\Phi := \Phi_b \cup \lbrace K_b \rbrace$ is a graph kit of $K$. Set the solid tori $R_{K_b} := \tilde{H}_1$ and $R_{\Gamma(K)} := \tilde{H}_1$.  The collection of these two solid tori together with the solid tori associated to the graph knots in $\Phi_b$ proves the theorem.

Assume now $K = K_1$. Our previous argument shows that $P_1$ is a graph knot and when $P_1$ is not trivial, there exists a graph kit $\Phi_1$ of $P_1$ satisfying the properties stated in the theorem. If $P_1$ is trivial, then take $\Phi_1$ to be the empty set. Since $K \succ P_1$, the knot $K$ is a graph knot and $\Phi_1 \cup \lbrace P_1 \rbrace$ is a graph kit of $K$. The collection of the solid tori corresponding to the graph knots in $\Phi_1$ together with $R_{P_1} := E_1$ and $R_{\Gamma(K)} := E_1$ proves the theorem.

Assume now $K \subseteq \tilde{H}_1$. If $P_1 \succ K_H$ or $K_H \succ P_1$, we can define $\lbrace K_a, K_b \rbrace$ and $f_1$  just as before and our previous argument shows that $K_b$ is a graph knot. Also, we get a graph kit $\Phi_b$ of $K_b$ satisfying the properties stated in the theorem for $f_1$. Let $\lbrace U_a, U_b \rbrace := \lbrace E_1, \tilde{H}_1 \rbrace$ be such that $U_a$ and $U_b$ are tubular neighborhoods of $K_a$ and $K_b$ respectively. If $K_H = K_b$, we simply define $\Phi_H := \Phi_b$.  If $K_H = K_a$, we define a graph kit $\Phi_H := \Phi_b \cup \lbrace K_b \rbrace$ of $K_H$ and the solid tori $R_{K_b} := U_b$ and $R_{\Gamma(K_H)} := U_b$. The graph kit $\Phi_H$ of $K_H$ with its associated solid tori satisfies then the properties stated in the theorem. We now embed $\tilde{H}_1$ into $S^3$ by $g: \tilde{H}_1 \rightarrow S^3$ such that $g(\tilde{H}_1)$ becomes a standard, unknotted solid torus in $S^3$. Such an embedding $g$ onto a standard, unknotted solid torus has been studied before in the $P_1 \neq P_2$ situation. We can similarly prove the theorem in this situation by combining the graph kits $\Phi_H$ and $\Phi_g$ of $g(K)$. This completes the proof of the theorem.
\end{proof}

\begin{proof}[Proof of Theorem \ref{k-mate}] Lemma \ref{thread} proves one side of the theorem and Theorem \ref{thread kit} proves the other side since any $k$-function $f$ can be made ordered by modifying it within a tubular neighborhood of its critical link without changing the set of critical points of $f$.
\end{proof}

\end{document}